\documentclass[a4paper,UKenglish,cleveref, autoref, thm-restate]{lipics-v2021}

\usepackage{amsmath, amsfonts, amssymb, amsthm, bbm}
\usepackage{mathtools}
\usepackage{enumerate}
\usepackage{hyperref}
\usepackage{todonotes} 
\usepackage{tikz}
\usetikzlibrary{shapes.geometric, positioning, backgrounds, calc}
\tikzset{source/.style={circle,fill=gray!70,draw,minimum size=0.5cm,inner 
		sep=0pt}}
\tikzset{non-source/.style={circle,draw,minimum size=0.5cm,inner 
		sep=0pt}}

\DeclarePairedDelimiter\ceil{\lceil}{\rceil}
\DeclareMathOperator{\ecc}{ecc}
\DeclareMathOperator{\rad}{rad}
\DeclareMathOperator{\diam}{diam}
\newcommand{\Bound}{\ceil{\sqrt{n}~}}
\newcommand{\BoundC}{\ceil{\sqrt{cn}~}}
\newcommand{\SK}[1]{\textcolor{black}{#1}}
\newcommand{\YM}[1]{\textcolor{black}{#1}}
\newcommand{\NV}[1]{\textcolor{black}{#1}}
\newcommand{\rev}[1]{\textcolor{black}{#1}}

\title{Burning Graph Powers and Branching Trees} 

\author{Jesper {Jansson}}{Graduate School of Informatics, Kyoto University, Japan}{jj@i.kyoto-u.ac.jp}{https://orcid.org/0000-0001-6859-8932}{JST ASPIRE grant JPMJAP2302}

\author{Shashanka {Kulamarva}}{Graduate School of Informatics, Kyoto University, Japan}{kulamarva.shashanka.3k@kyoto-u.ac.jp}{https://orcid.org/0009-0002-2982-6044}{JST ASPIRE grant JPMJAP2302}

\author{Yukihiro {Murakami}}{Delft Institute of Applied Mathematics, TU Delft, the Netherlands}{Y.Murakami@tudelft.nl}{https://orcid.org/0000-0003-1355-5884}{}

\author{Nikolaas {Verhulst}}{Delft Institute of Applied Mathematics, TU Delft, the Netherlands}{N.D.Verhulst@tudelft.nl}{https://orcid.org/0000-0002-1631-0228}{}

\authorrunning{J. Jansson, S. Kulamarva, Y. Murakami, and N. Verhulst} 

\Copyright{Jesper Jansson, Shashanka Kulamarva, Yukihiro Murakami, and Nikolaas Verhulst} 

\ccsdesc[500]{Mathematics of computing~Graph theory}
\ccsdesc[500]{Mathematics of computing~Trees}
\ccsdesc[500]{Mathematics of computing~Paths and connectivity problems}

\keywords{Graph burning, Burning number, Graph power, \texorpdfstring{$k^+$}{k+}-branching tree} 

\category{Invited Talk} 

\relatedversion{} 

\nolinenumbers 

\EventEditors{Michal Kouck\'{y} and Daniela Petri\c{s}an}
\EventNoEds{2}
\EventLongTitle{51st International Symposium on Mathematical Foundations of Computer Science (MFCS 2026)}
\EventShortTitle{MFCS 2026}
\EventAcronym{MFCS}
\EventYear{2026}
\EventDate{August 24--28, 2026}
\EventLocation{Paris, France}
\EventLogo{}
\SeriesVolume{386}
\ArticleNo{12}

\begin{document}

\maketitle

\begin{abstract}
    Graph burning is a discrete-time process that models the spread of social contagion.
    Initially, all vertices are unburned.
    In each round, one unburned vertex is selected and burned, while any unburned vertex that has a burned neighbour from the previous round also becomes burned.
    The burning number of a graph is the minimum number of rounds needed to burn the entire graph.
    In this paper, we study the burning number of graph powers.
    First, we show that for a connected graph~$G$, its graph power~$G^k$ contains a~$(k+1)^+$-branching tree as a spanning tree.
    A~$(k+1)^+$-branching tree is one in which all internal vertices have degree at least~$k+1$.
    We then show that $(k+1)^+$-branching trees on~$n$ vertices have burning number at most $\left\lceil{\sqrt{\frac{4(k-1)n}{k^2}}}~\right\rceil$.
    As the burning number of a graph is at most the burning number of any of its spanning trees, this gives an upper bound on the burning number of graph powers.
    We also derive an alternative upper bound on the burning number of~$k^+$-branching trees using the strongest currently known general burning number bound [Bastide et al.]. 
    We then identify the ranges of~$k$ and~$n$ for which our bound outperforms or matches this alternative bound.
    Finally, we show that~$b(G^k) \le (1+o(1))\sqrt{n/k}$ based on the asymptotic burning number bound of Norin and Turcotte.
\end{abstract}

\section{Introduction}\label{sec:Intro}
\SK{\emph{Graph burning} is a discrete-time process introduced by Bonato et al.~\cite{bonato2016burn} to model the spread of contagion, influence, or information in a network.
Given a graph $G$, the process evolves in discrete rounds.
In each round, one chooses \rev{a} new vertex to be burned, while simultaneously the fire spreads from every vertex burned in earlier rounds to all of its neighbours.
Once a vertex is burned, it remains burned for the remainder of the process.
The burning number of $G$, denoted $b(G)$, is the minimum number of rounds required to burn all vertices of $G$.
The formal definition of graph burning is provided in \Cref{sec:Prelim}.}

\SK{Although graph burning was formalised as a graph parameter only recently, the same underlying problem was studied earlier in 1992 by Alon~\cite{alon1992cube} for hypercubes, in a communication-theoretic setting.
Graph burning is also closely related to several well-studied propagation processes on graphs,
such as the firefighter problem~\cite{finbow2009firefighter}}
\YM{and the~$k$-center problem~\cite{hakimi1964optimum}}.

\SK{
Graph burning has attracted considerable attention in recent years.
Bonato et al.~\cite{bonato2016burn} proved that every connected graph $G$ on $n$ vertices satisfies $b(G)\le 2 \Bound - 1$, and conjectured a tighter bound that is attained by paths.
\begin{conjecture}[\cite{bonato2016burn}]\label{conj:BurningNumber}
    If $G$ is a connected graph on $n$ vertices, then $b(G) \le \Bound$.
\end{conjecture}
This conjecture, commonly referred to as the \emph{Burning Number Conjecture}, remains one of the central open problems in the area.
A sequence of works has progressively improved the best-known general upper bound.
Following the initial bound of Bonato et al.~\cite{bonato2016burn}, Bessy et al.~\cite{bessy2018bounds} obtained a better upper bound}
\YM{($\sqrt{12n/3}+3$)} 
\SK{which was subsequently improved by Land and Lu}
\YM{($\lceil (\sqrt{24n+33}-3)/ 4\rceil$)}
\SK{~\cite{land2016burningbound} and later by Bastide et al.}
\YM{($\lceil \sqrt{4n/3}~\rceil + 1$)}
\SK{\cite{bastide2023improved}.
More recently, Norin and Turcotte~\cite{norin2024burning} proved that the conjecture holds asymptotically, i.e., $b(G) \le (1 + o(1)) \sqrt{n}$ for every connected graph $G$.}

\SK{To date, the conjecture has been settled in the affirmative for several tree classes, including paths~\cite{bonato2016burn}, spiders~\cite{bonato2019spiderpathforest}, $p$-caterpillars (for certain~$p$)~\cite{hiller2021caterpillar,liu2020caterpillar}},
\YM{and homeomorphically irreducible trees (HITs), which are trees whose internal vertices have degree at least~$3$~\cite{murakami2024burning}.}
\SK{This last result was recently extended to HITs on~$n$ vertices with at most $\lfloor \sqrt{n-1} \rfloor$ degree-$2$ vertices~\cite{ning2025burning}.}
\SK{For a survey of} \YM{other} \SK{recent results, see~\cite{bonato2021burningsurvey}.}

\YM{In this paper, we consider the burning of graph powers.}
\YM{To do so, we generalise the notion of HITs to~$k^+$-branching trees, which are trees whose internal vertices have degree at least~$k$.
We first show that for a tree~$T$, its graph power~$T^k$ contains a~$(k+1)^+$-branching \SK{spanning} tree (\Cref{lem:GraphPowerHask-angular}).
We then show that for~$k>1$, a~$k^+$-branching tree~$T$ on~$n$ vertices has burning number~$b(T)\le \left\lceil{\sqrt{\frac{4(k-2)n}{(k-1)^2}}}~\right\rceil$ (\Cref{thm:k-angBoundle}).
Since the burning number of a graph equals the minimum burning number of its spanning trees~\cite{bonato2016burn}, it follows that for a graph~$G$ on~$n$ vertices, 
\[b(G^k)\le \left\lceil\sqrt{\frac{4(k-1)}{k^2}n}~\right\rceil \quad\text{(\Cref{thm:GraphPowerBN})}.\]
In particular, this shows that any graph that is the square of some other graph satisfies~\Cref{conj:BurningNumber} (\Cref{cor:GraphSquareBN}).
In fact, since~$b(G^k)\le b(G^2)$, this shows that any non-trivial graph power satisfies \Cref{conj:BurningNumber}.
}

\YM{We also compare the burning number bound for~$k^+$-branching trees on~$n$ vertices of \Cref{thm:k-angBoundle} to the state-of-the-art bound of \cite{bastide2023improved}.
We first show that a~$k^+$-branching tree contains at least~$n(k-2)/(k-1)$ leaves (\Cref{lem:k-branchleaflowerbound}); by removing all leaves, adding an extra round, and using~\cite{bastide2023improved}, we show that~$k^+$-branching trees can be burned in at most~$\left\lceil \sqrt{\frac{4n}{3(k-1)}}~\right\rceil + 2$ rounds (\Cref{thm:Bastide4/3GP}).
We illustrate in \Cref{tab:UsvsBastide} the values of~$n$ and~$k$ where our bound from \Cref{thm:k-angBoundle} outperforms or matches that of~\Cref{thm:Bastide4/3GP}.
Finally, we show that the asymptotic bound on the burning number for~$k^+$-branching trees is $(1+o(1))\sqrt{n/(k-1)}$ (\Cref{thm:NorinGP})}.
\section{Preliminaries}\label{sec:Prelim}

For brevity, all graphs are assumed to be finite, simple, connected, and undirected, unless stated otherwise.
Given a graph~$G$, we write~$V(G)$ and~$E(G)$ to denote the vertex set and edge set of~$G$, respectively.
The \emph{degree} of a vertex~$v$, denoted~$\deg(v)$, is the number of edges incident to it.
By a \emph{tree}, we mean an acyclic graph. 
Within trees, vertices of degree $1$ \SK{are called leaves}; all other vertices are called \emph{internal vertices}.
Let~$k\in\mathbb{N}$.
A \emph{$k^+$-branching} tree is a tree in which all internal vertices are of degree at least~$k$.
By definition, all trees are $2^+$-branching.

Let~$G=(V,E)$ be a graph, and let~$u,v\in V$ be distinct vertices.
We write~$|G|=|V|$ to denote the number of vertices in~$G$.
The \emph{distance} between~$u$ and~$v$, denoted~$d_G(u,v)$, is the number of edges in a shortest path connecting them.
We remove the subscript and write~$d(u,v)$ when there is no ambiguity in the reference graph.
Note that in trees, such a shortest path is unique.
The \emph{eccentricity} of a vertex~$v$, denoted~$\ecc(v)$, is the greatest distance from~$v$ to any other vertex in~$G$, i.e., \[\ecc(v) = \max_{u\in V} d(u,v).\]
The \emph{radius} of~$G$, denoted~$\rad(G)$, is the minimum eccentricity over all vertices in~$G$, and the \emph{diameter} of~$G$, denoted~$\diam(G)$, is the maximum eccentricity over all vertices in~$G$, i.e.,
\begin{align*}
    & \rad(G) = \min_{v\in V} \ecc(v);\\
    & \diam(G) = \max_{v\in V} \ecc(v).
\end{align*} 
In other words, the diameter is the length of the longest shortest path between any two vertices.
For an integer~$k\ge0$, the \emph{$k$-neighbourhood} of a vertex~$v$, denoted $N^G_k[v]$, is the set of vertices that are at most distance $k$ away from~$v$, i.e., \[N^G_k[v] = \{u\in V: d(u,v) \le k\}.\]
In addition, the set $N^G_k[v] \setminus \{v\}$ is denoted by $N^G_k(v)$.
When there is no ambiguity in the reference graph, we shall simply write~$N_k[v]$ or $N_k(v)$.
For an integer~$k\ge1$, we write~$[k]:= \{1,2,\ldots, k\}$.

\SK{An edge in a graph is a \emph{bridge} (or \emph{cut-edge}) if its removal increases the number of connected components in the graph.}
Let~$xy$ be a bridge in~$G$.
We let~$G_{x}(xy)$ denote the component in which~$x$ resides, upon removing~$xy$ from~$G$.

Let~$k\in\mathbb{N}$. 
The \emph{$k$th power of~$G$} is the graph~$G^k=(V,E')$ where~\[E'= \{uv: u,v\in V, d_G(u,v) \le k\}.\]
In other words,~$G^k$ has the same vertex set as~$G$, with edges between vertices which are at distance at most~$k$ in~$G$.

\SK{We now define the process of graph burning.
It proceeds in discrete rounds.
\rev{Vertices can either be in a \emph{burned} or an \emph{unburned} state. Once a vertex becomes burned, it cannot be unburned. 
All vertices start unburned.}
In each round (time-step) \rev{$i\ge 1$}, a new vertex $b_i$, called a \emph{source}, is chosen to be burned, and simultaneously, all unburned neighbours of every vertex that was burned in step $i-1$ become burned.
Note that~$b_i$ could already be burned in round~$i$.
The process continues until all vertices of $G$ are burned.}
\SK{The sequence of selected sources $(b_1,b_2,\ldots,b_k)$ is called a \emph{burning sequence} for $G$.}
\SK{The \emph{burning number} of a graph $G$, denoted $b(G)$, is the minimum number of rounds required to burn all vertices of $G$.} 
See \Cref{fig:ExampleBurning} for an example of burning a graph.

\begin{figure}[ht!]
	\centering
	\resizebox{1\textwidth}{!}{%
	\begin{tikzpicture}
		\begin{scope}
			
			\node[non-source] (2) {};
            \node[draw = none, fill = none, above=1mm of 2]  (a) {\large $v_3$};
			\node[non-source] (1) [above left = 0.75cm of 2] {};
            \node[draw = none, fill = none, above=1mm of 1]  (a) {\large $v_1$};
			\node[non-source] (3) [below left = 0.75cm of 2] {};
            \node[draw = none, fill = none, below=1mm of 3]  (a) {\large $v_2$};
			\node[non-source] (4) [right = 0.5cm of 2] {};
            \node[draw = none, fill = none, above=1mm of 4]  (a) {\large $v_4$};
			\node[non-source] (5) [below = 0.4cm of 4] {};
            \node[draw = none, fill = none, below=1mm of 5]  (a) {\large $v_5$};
            \node[draw = none, fill = none, below=8mm of 5]  (a) {\large round $0$};
			\node[non-source] (6) [right = 0.5cm of 4] {}; 
            \node[draw = none, fill = none, above=1mm of 6]  (a) {\large $v_6$};
			\node[non-source] (7) [below = 0.4cm of 6] {};
            \node[draw = none, fill = none, below=1mm of 7]  (a) {\large $v_7$};
			\node[non-source] (8) [right = 0.5cm of 7] {};
            \node[draw = none, fill = none, below=1mm of 8]  (a) {\large $v_8$};
            \node[non-source] (9) [above = 0.4cm of 8] {};
            \node[draw = none, fill = none, above=1mm of 9]  (a) {\large $v_9$};
			
			\path[draw,thick]
			(1) edge node {} (2)
			(2) edge node {} (3)
			(2) edge node {} (4)
			(4) edge node {} (5)
			(4) edge node {} (6)
			(6) edge node {} (7)
			(6) edge node {} (8)
            (8) edge node {} (9);
		\end{scope}
		\begin{scope}[xshift=5cm]
			
			\node[source] (2) {$1$};
            \node[draw = none, fill = none, above=1mm of 2]  (a) {\large $v_3$};
			\node[non-source] (1) [above left = 0.75cm of 2] {};
            \node[draw = none, fill = none, above=1mm of 1]  (a) {\large $v_1$};
			\node[non-source] (3) [below left = 0.75cm of 2] {};
            \node[draw = none, fill = none, below=1mm of 3]  (a) {\large $v_2$};
			\node[non-source] (4) [right = 0.5cm of 2] {};
            \node[draw = none, fill = none, above=1mm of 4]  (a) {\large $v_4$};
			\node[non-source] (5) [below = 0.4cm of 4] {};
            \node[draw = none, fill = none, below=1mm of 5]  (a) {\large $v_5$};
            \node[draw = none, fill = none, below=8mm of 5]  (a) {\large round $1$};
			\node[non-source] (6) [right = 0.5cm of 4] {}; 
            \node[draw = none, fill = none, above=1mm of 6]  (a) {\large $v_6$};
			\node[non-source] (7) [below = 0.4cm of 6] {};
            \node[draw = none, fill = none, below=1mm of 7]  (a) {\large $v_7$};
			\node[non-source] (8) [right = 0.5cm of 7] {};
            \node[draw = none, fill = none, below=1mm of 8]  (a) {\large $v_8$};
            \node[non-source] (9) [above = 0.4cm of 8] {};
            \node[draw = none, fill = none, above=1mm of 9]  (a) {\large $v_9$};
			
			\path[draw,thick]
			(1) edge node {} (2)
			(2) edge node {} (3)
			(2) edge node {} (4)
			(4) edge node {} (5)
			(4) edge node {} (6)
			(6) edge node {} (7)
			(6) edge node {} (8)
            (8) edge node {} (9);
		\end{scope}
		\begin{scope}[xshift=10cm]
			
			\node[source] (2) {$1$};
            \node[draw = none, fill = none, above=1mm of 2]  (a) {\large $v_3$};
			\node[non-source] (1) [above left = 0.75cm of 2] {$2$};
            \node[draw = none, fill = none, above=1mm of 1]  (a) {\large $v_1$};
			\node[non-source] (3) [below left = 0.75cm of 2] {$2$};
            \node[draw = none, fill = none, below=1mm of 3]  (a) {\large $v_2$};
			\node[non-source] (4) [right = 0.5cm of 2] {$2$};
            \node[draw = none, fill = none, above=1mm of 4]  (a) {\large $v_4$};
			\node[non-source] (5) [below = 0.4cm of 4] {};
            \node[draw = none, fill = none, below=1mm of 5]  (a) {\large $v_5$};
            \node[draw = none, fill = none, below=8mm of 5]  (a) {\large round $2$};
			\node[source] (6) [right = 0.5cm of 4] {$2$}; 
            \node[draw = none, fill = none, above=1mm of 6]  (a) {\large $v_6$};
			\node[non-source] (7) [below = 0.4cm of 6] {};
            \node[draw = none, fill = none, below=1mm of 7]  (a) {\large $v_7$};
			\node[non-source] (8) [right = 0.5cm of 7] {};
            \node[draw = none, fill = none, below=1mm of 8]  (a) {\large $v_8$};
            \node[non-source] (9) [above = 0.4cm of 8] {};
            \node[draw = none, fill = none, above=1mm of 9]  (a) {\large $v_9$};
			
			\path[draw,thick]
			(1) edge node {} (2)
			(2) edge node {} (3)
			(2) edge node {} (4)
			(4) edge node {} (5)
			(4) edge node {} (6)
			(6) edge node {} (7)
			(6) edge node {} (8)
            (8) edge node {} (9);
		\end{scope}
		\begin{scope}[xshift=15cm]
			
			\node[source] (2) {$1$};
            \node[draw = none, fill = none, above=1mm of 2]  (a) {\large $v_3$};
			\node[non-source] (1) [above left = 0.75cm of 2] {$2$};
            \node[draw = none, fill = none, above=1mm of 1]  (a) {\large $v_1$};
			\node[non-source] (3) [below left = 0.75cm of 2] {$2$};
            \node[draw = none, fill = none, below=1mm of 3]  (a) {\large $v_2$};
			\node[non-source] (4) [right = 0.5cm of 2] {$2$};
            \node[draw = none, fill = none, above=1mm of 4]  (a) {\large $v_4$};
			\node[non-source] (5) [below = 0.4cm of 4] {$3$};
            \node[draw = none, fill = none, below=1mm of 5]  (a) {\large $v_5$};
            \node[draw = none, fill = none, below=8mm of 5]  (a) {\large round $3$};
			\node[source] (6) [right = 0.5cm of 4] {$2$}; 
            \node[draw = none, fill = none, above=1mm of 6]  (a) {\large $v_6$};
			\node[non-source] (7) [below = 0.4cm of 6] {$3$};
            \node[draw = none, fill = none, below=1mm of 7]  (a) {\large $v_7$};
			\node[non-source] (8) [right = 0.5cm of 7] {$3$};
            \node[draw = none, fill = none, below=1mm of 8]  (a) {\large $v_8$};
            \node[source] (9) [above = 0.4cm of 8] {$3$};
            \node[draw = none, fill = none, above=1mm of 9]  (a) {\large $v_9$};
			
			\path[draw,thick]
			(1) edge node {} (2)
			(2) edge node {} (3)
			(2) edge node {} (4)
			(4) edge node {} (5)
			(4) edge node {} (6)
			(6) edge node {} (7)
			(6) edge node {} (8)
            (8) edge node {} (9);
		\end{scope}
	\end{tikzpicture}
	}%
	\caption{
    \rev{Suppose} we burn the tree on 9 vertices shown above using the sequence~$(v_3,v_6,v_9)$. 
    The four copies show the tree at the end of rounds~0--3. 
    Labels indicate the round in which each vertex burns, unlabelled vertices are unburned in that round, and sources are shown in gray. 
    In round~1, we burn~$v_3$; in round~2, its neighbours $v_1,v_2,v_4$ burn and we burn~$v_6$; in round~3, the remaining neighbours $v_5,v_7,v_8$ burn and we burn~$v_9$. 
    Thus all vertices burn by round~3. 
    Since the maximum degree is~3, at most five vertices can be burned by round~2, so no sequence of length~2 exists, and the burning number of the tree is~3.
 }
	\label{fig:ExampleBurning}
\end{figure}

\section{Burning graph powers}\label{sec:GraphPowers}

In this section, we show that given a graph~$G$ on~$n$ vertices and \SK{an} integer~$2\le k\le \diam(G)-1$,
\begin{equation}\label{eqn:GraphPowerIntro}
    b(G^k) \le \left\lceil\sqrt{\frac{4(k-1)}{k^2}n}~\right\rceil.
\end{equation}
We do this in three steps.
Firstly, in \Cref{subsec:TreePowers}, we show that for a tree~$T$ with diameter~$d$,~$T^k$ contains a~$(k+1)^+$-branching spanning tree whenever~$1\le k\le d-1$ (\Cref{lem:GraphPowerHask-angular}).
Next, in \Cref{subsec:BBranching}, we show that every~$(k+1)^+$-branching tree on~$n$ vertices has burning number at most $\left\lceil\sqrt{\frac{4(k-1)}{k^2}n}~\right\rceil$ (\Cref{thm:k-angBoundle}).
Finally, in \Cref{subsec:BGraphPowers}, we combine these results. 
Every graph~$G$ contains a spanning tree~$T$. Noting that~$T^k$ is a subgraph of~$G^k$, we argue, using the steps above, that~$G^k$ must contain a~$(k+1)^+$-branching spanning tree, which establishes Inequality~\ref{eqn:GraphPowerIntro} (\Cref{thm:GraphPowerBN}). 

\subsection{Tree powers and \texorpdfstring{$k^+$}{k+}-branching spanning trees}\label{subsec:TreePowers}

We recall the following key lemma, which essentially states that every tree contains a bridge whose removal separates the tree into two subtrees in such a way that one of the subtrees will be `large’ yet consist only of `small' subsubtrees directly attached to the bridge.

\begin{lemma}[Lemma 8 of \cite{ning2025burning}]\label{lem:TheRightBridgeStrong}
    Let~$T$ be a tree on~$n$ vertices, where~$n\ge 3$. 
    Then for any real number~$p\in[1,n-1)$, there exists a vertex~$v$ in~$T$ with~$N_1(v) = \{v_1,v_2,\ldots, v_m\}$ where~$m\ge2$, such that~$|T_v(vv_m)|>p$ and $|T_{v_i}(vv_i)|\le p$ for all~$i\in[m-1].$
\end{lemma}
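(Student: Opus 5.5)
Our plan is to root $T$ at an arbitrary vertex and walk downward, guided by subtree sizes, until we reach the deepest vertex whose subtree is still large. Call a vertex $w$ \emph{heavy} if, after rooting $T$ at some fixed leaf $r$, the subtree $T_w$ hanging below $w$ (consisting of $w$ and its descendants) has more than $p$ vertices. The root is heavy, because $|T_r| = n > p$. We then pick a heavy vertex $v$ that has maximum depth among all heavy vertices. Every child $c$ of $v$ is then light, that is, $|T_c| \le p$. This choice is the heart of the proof, and what remains is to identify the required neighbourhood structure at $v$.

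First suppose $v \neq r$, and let $u$ be the parent of $v$. We label the neighbours of $v$ so that $v_m = u$ and $v_1,\ldots,v_{m-1}$ are the children of $v$. The component $T_v(vu)$ is exactly the subtree $T_v$, so $|T_v(vv_m)| = |T_v| > p$. For each child $v_i$, the component $T_{v_i}(vv_i)$ is exactly $T_{v_i}$, so it has at most $p$ vertices. It remains to check that $m \ge 2$, i.e.\ that $v$ has at least one child. If $v$ had no children, then $|T_v| = 1$, which would contradict $|T_v| > p \ge 1$. So $v$ has a child, and together with its parent this gives $m \ge 2$.

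Next we must rule out the case $v = r$. This case can occur only when the unique child $c$ of the leaf $r$ is light, i.e.\ $n - 1 = |T_c| \le p$. But that would mean $p \ge n-1$, which contradicts the hypothesis $p < n-1$. So $v \neq r$, and the argument of the previous paragraph applies.

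The main subtlety is the choice of root. Rooting at a leaf guarantees that $v$ has a parent, which is then available to play the role of $v_m$. It also means that the strict bound $p < n-1$ is exactly what excludes $v = r$. The hypothesis $n \ge 3$ is used only to ensure that the interval $[1, n-1)$ is nonempty, and that $T$ has a leaf whose neighbour is internal. With this setup, all the size checks are immediate from the definitions.
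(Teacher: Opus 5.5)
Your proof is correct and complete. Root $T$ at a leaf $r$ and take a deepest vertex $v$ with $|T_v|>p$; this gives everything the statement asks for:
\begin{itemize}
\item every child of $v$ is light, by maximality of depth;
\item for the parent $u$ of $v$ we have $|T_v(vu)|=|T_v|>p$;
\item $m\ge 2$, because $|T_v|$ is an integer exceeding $p\ge 1$, so $v$ has at least one child in addition to its parent;
\item $v\neq r$, because the unique child of $r$ has $n-1>p$ descendants (itself included) and is therefore a deeper heavy vertex.
\end{itemize}

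The paper does not prove this lemma itself; it imports it as Lemma~8 of \cite{ning2025burning}, so there is no in-paper argument to compare against. Your rooted ``deepest heavy vertex'' argument is a standard self-contained justification. An equivalent formulation is to choose an ordered edge $(v,u)$ minimising $|T_v(vu)|$ subject to $|T_v(vu)|>p$, and then use that each $T_{v_i}(vv_i)$ is a proper subtree of $T_v(vu)$.

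One minor point: your remark that $n\ge 3$ guarantees a leaf with an internal neighbour is never used. The only role of $n\ge 3$ is to make $[1,n-1)$ nonempty.
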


The following lemma \NV{establishes} a connection between tree powers and~$k^+$-branching spanning trees.

\begin{lemma}\label{lem:GraphPowerHask-angular}
    Let~$T = (V,E)$ be a tree with diameter~$d$, and let~$1\le k\le d-1$ be \SK{an} integer.
    Then~$T^k$ contains a~$(k+1)^+$-branching spanning tree.
\end{lemma}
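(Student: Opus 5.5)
We would prove the stronger statement that \emph{every} tree~$T$ on~$n\ge k+2$ vertices has a~$(k+1)^+$-branching spanning tree in~$T^k$. This implies the lemma, because a diameter~$d\ge k+1$ forces a path on at least~$k+2$ vertices.

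The case~$k=1$ is immediate, since~$T^1=T$ and every tree is~$2^+$-branching. So assume~$k\ge 2$ and induct on~$n$. If~$\diam(T)\le k$, then~$T^k$ is complete. In that case the star centred at any vertex is a spanning tree whose centre has degree~$n-1\ge k+1$, and we are done.

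Otherwise~$n\ge k+2\ge 3$. Apply \Cref{lem:TheRightBridgeStrong} with~$p=k-1$, which is legal since~$1\le k-1<n-1$. This gives a vertex~$v$ with neighbours~$v_1,\dots,v_m$ such that each branch~$T_{v_i}(vv_i)$, $i\in[m-1]$, has at most~$k-1$ vertices, and~$S:=T_v(vv_m)$ has at least~$k$ vertices. Since each small branch has at most~$k-1$ vertices, every vertex of~$S$ lies within distance~$k-1$ of~$v$ in~$T$. Hence every vertex of~$S$ lies within distance~$k$ of~$v_m$, so~$v_m$ is adjacent in~$T^k$ to all of~$S$. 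Let~$R:=T_{v_m}(vv_m)$. This is a subtree of~$T$, and~$R^k$ is a subgraph of~$T^k$.

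There are two cases. If~$|R|\ge k+2$, the induction hypothesis gives a~$(k+1)^+$-branching spanning tree~$T_R$ of~$R^k$. We attach every vertex of~$S$ as a leaf of~$v_m$. The result is a spanning tree of~$T^k$, in which all old vertices keep their degrees and all new vertices are leaves. The vertex~$v_m$ has degree at least~$1$ in~$T_R$ plus at least~$|S|\ge k$ new leaves, so its degree is at least~$k+1$. This holds regardless of whether~$v_m$ was a leaf of~$T_R$, and it is exactly why we choose~$p=k-1$ and hang~$S$ from~$v_m$ rather than from~$v$. If instead~$|R|\le k+1$, every vertex of~$R$ is within distance~$k$ of~$v_m$. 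In that case the star centred at~$v_m$ on all of~$V$ is a spanning tree of~$T^k$, and its centre has degree~$n-1\ge k+1$.

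The main obstacle we anticipate is the gluing step. Naively connecting the new part to the inductively built tree can create an internal vertex of degree~$2$, for instance if the attachment vertex was a leaf. The plan avoids this by attaching a whole batch of at least~$k$ new leaves to a single vertex~$v_m$, all within distance~$k$ of it. This is exactly what the choice~$p=k-1$ in \Cref{lem:TheRightBridgeStrong} provides. The remaining care is in the small cases: verifying that~$p<n-1$, and handling~$|R|\le k+1$ directly instead of by induction.
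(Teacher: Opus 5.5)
Your proof is correct and follows essentially the same strategy as the paper: use \Cref{lem:TheRightBridgeStrong} to find a cluster of at least $k$ vertices that all lie within distance $k$ of a vertex surviving in the reduced tree, attach that cluster as leaves of that vertex, recurse, and finish with a star. The differences are only bookkeeping. You take $p=k-1$ and hang all of $T_v(vv_m)$ on $v_m$, whereas the paper takes $p=k$ and hangs $T_x(xy)\setminus\{x\}$ on $x$, which becomes a leaf of the reduced tree. Your strengthened invariant $n\ge k+2$ also replaces the paper's minimality argument that the final tree has diameter at least $k+1$.
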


\begin{proof}
    We first show that the claim holds when~$\rad(T)\le k$.
    Then there exists a vertex~$u\in V$ such that~$d(u,v)\le k$ for all~$v\in V$, which means~$uv\in E(T^k)$ for all~$v\in V$.
    Then the star graph with~$u$ as the central vertex is a spanning tree of~$T^k$, and we are done.
    Note that this star graph is~$(k+1)^{+}$-branching since the degree of~$u$ in~$T^k$ is $|V| - 1 \ge d \ge k+1$.

    Now suppose $\rad(T)>k$.
    Let~$T_0:= T$.
    For every integer~$i\ge 1$, we \SK{recursively} construct a tree~$T_{i}$ from~$T_{i-1}$, until we obtain a tree~$T_\ell$ for some integer~$\ell$, such that~$\rad(T_\ell)\le k$.
    \rev{
    In every step, we find a vertex $x_{i}$ with a distinguished neighbour $v_{m_{i}}^{i}$ such that two conditions hold: firstly, all vertices closer to $x_{i}$ than to $v_{m_{i}}^{i}$ are neighbours of $x_{i}$ in $T^{k}$ and secondly, there are at least $k$ such vertices. This then allows us to construct a spanning tree of $T^{k}$ by making $x_{i}$ an internal vertex and attaching those vertices closer to $x_{i}$ than to $v_{m_{i}}^{i}$ as leaves to $x_{i}$.
    \YM{We refer the reader to \Cref{fig:GraphPowerHask-angular} for an illustration of the construction.}}

    \begin{figure}[ht]
\centering
\resizebox{\textwidth}{!}{%
\begin{tikzpicture}[scale=1.1, every node/.style={font=\small}]


\node at (0.7,3.1) {$T_{i-1}$};

\node[circle, fill, inner sep=1.5pt] (x1) at (0.7,1.6) {};
\node[below] at (x1) {$x_i$};

\node[circle, fill, inner sep=1.5pt] (v1) at (0,2.2) {};
\node[right=3pt] at (v1) {$v^i_1$};

\node[circle, fill, inner sep=1.5pt] (v2) at (0,1.6) {};
\node[above=1pt] at (v2) {$v^i_2$};


\node[circle, fill, inner sep=1.5pt] (vm1) at (0,0.8) {};
\node[right=3pt] at (vm1) {$v^i_{m_i-1}$};


\draw (x1) -- (v1);
\draw (x1) -- (v2);
\draw[densely dotted] ($(v2) + (0,-0.2)$) -- ($(vm1) + (0,0.2)$);
\draw (x1) -- (vm1);

\foreach \n in {v1,v2,vm1} {
    \draw (\n) -- ++(-0.6,0.2);
    \draw (\n) -- ++(-0.6,0);
    \draw (\n) -- ++(-0.6,-0.2);
    \draw[densely dotted] ($( \n ) + (-0.5,0)$) -- ++(0,-0.166);
}

\node[circle, fill, inner sep=1.5pt] (y1) at (1.4,1.6) {};
\node[below] at (y1) {$v^i_{m_i}$};
\draw (x1) -- (y1);
\foreach \dy in {0.4,0,-0.4} {
    \draw (y1) -- ++(0.5,\dy);
    \draw[densely dotted] ($(y1) + (0.4,0)$) -- ++(0,-0.3);
}

\draw[->, thick] (2.4,1.6) -- ++(0.6,0);

\node at (4.2,3.1) {$T_i$};

\node[circle, fill, inner sep=1.5pt] (x2) at (3.5,1.6) {};
\node[below] at (x2) {$x_i$};

\node[circle, fill, inner sep=1.5pt] (y2) at (4.2,1.6) {};
\node[below] at (y2) {$v^i_{m_i}$};

\draw (x2) -- (y2);

\foreach \dy in {0.4,0,-0.4} {
    \draw (y2) -- ++(0.8,\dy);
    \draw[densely dotted] ($(y2) + (0.6,0)$) -- ++(0,-0.3);
}

\node[draw, rectangle, dashed, rounded corners=5pt, minimum width=2.1cm, minimum height=2.6cm] (box1) at (0.08,1.3) {};
\node[] at ([xshift=0.13cm, yshift=-1cm] box1.east) {$\dagger$};

\node[] at (3.313,0.6) {$\dagger = {T_{i-1}}_{x_i}(x_iv^i_{m_i})$};

\node[draw, rectangle, dashed, rounded corners=5pt, minimum width=1cm, minimum height=0.6cm] (box2) at (-0.3,0.8) {};
\node[] at ([xshift=0cm, yshift=-0.15cm] box2.south) {$\star$};

\node[] at (3.7,0) {$\star = {T_{i-1}}_{v^i_{m_i-1}}(x_iv^i_{m_i-1})$};

\node[draw, rectangle, rounded corners=5pt, minimum width=7cm, minimum height=4cm] (box3) at (2.1,1.55) {};

\node[draw, rectangle, rounded corners=5pt, minimum width=3.35cm, minimum height=3.2cm] (box4) at (0.55,1.4) {};

\node[draw, rectangle, rounded corners=5pt, minimum width=2.05cm, minimum height=1.5cm] (box5) at (4.2,1.55) {};

\begin{scope}[xshift = 6.5cm, yshift=3cm]


\node at (3.5,0.1) {$S_i$};

\node[circle, fill, inner sep=1.5pt] (xs2) at (3,-1.4) {};
\node[below] at (xs2) {$x_i$};
\node[circle, draw, inner sep=1.5pt] (p2) at (3.3,-1.4) {};
\draw (xs2) -- (p2);
\draw (p2) -- ++(0.3,0);
\node at ($(p2) + (0.55,0)$) {$\cdots$};

\draw[->, thick] (2.53,-1.4) -- ++(-0.6,0);

    \node at (0.7,0.1) {$S_{i-1}$};
    
    \node[circle, draw, inner sep=1.5pt] (xs1) at (0.7,-1.4) {};
    \node[above] at (xs1) {$x_i$};
    \node[circle, draw, inner sep=1.5pt] (p1) at (1,-1.4) {};
    \draw (xs1) -- (p1);

    \draw (p1) -- ++(0.3,0);
    \node at ($(p1) + (0.55,0)$) {$\dots$};
    
    

    \node[circle, fill, inner sep=1.5pt] (sv1) at (0,-0.8) {};
    \node[right=3pt] at (sv1) {$v^i_1$};
    
    \node[circle, fill, inner sep=1.5pt] (sv2) at (0,-1.4) {};
    \node[left] at (sv2) {$v^i_{m_i}$};
    
    \node[circle, fill, inner sep=1.5pt] (sv) at (0,-2) {};
    \node[below=3pt] at (sv) {};

    \draw (xs1) -- (sv1);
    \draw (xs1) -- (sv2);
    \draw (xs1) -- (sv);
    \draw[densely dotted] ($(sv1) + (0,-0.2)$) -- ($(sv2) + (0,0.2)$);
    \draw[densely dotted] ($(sv2) + (0,-0.2)$) -- ($(sv) + (0,0.2)$);

    \node[draw, rectangle, rounded corners=5pt, minimum width=6cm, minimum height=4cm] (box1) at (1.75,-1.45) {};

    \node[draw, rectangle, rounded corners=5pt, minimum width=2.7cm, minimum height=2cm] (box2) at (0.55,-1.4) {};

    \node[draw, rectangle, rounded corners=5pt, minimum width=1.8cm, minimum height=2cm] (box3) at (3.5,-1.4) {};

\end{scope}

\end{tikzpicture}
}
\caption{A visualisation of the proof of \Cref{lem:GraphPowerHask-angular}.
Left: Construction of~$T_i$ from~$T_{i-1}$ when~$\rad(T_{i-1}) > k$.
Right: Construction of~$S_{i-1}$ from~$S_{i}$. 
In both figures, the dotted lines indicate the \rev{(potential)} existence of more vertices.
In the right diagram, filled (black) vertices are leaves and unfilled vertices are those of degree at least~$k$.
Furthermore, the visible leaves in~$S_{i-1}$ are all vertices from~$V(T_{i-1})\setminus V(T_i)$.}
\label{fig:GraphPowerHask-angular}
\end{figure}


    \rev{We now formally describe the construction.}
    Suppose we have~$T_{i-1}$ such that~$\rad(T_{i-1}) >k$.
    As~$k \ge 1$, we know that~$T_{i-1}$ has at least~$3$ vertices. 
    Hence, we may apply \Cref{lem:TheRightBridgeStrong} to~$T_{i-1}$ for the fixed value~$k$ to obtain a vertex~$x_i$ in~$T_{i-1}$ with~$N^{T_{i-1}}_1(x_i)
    = \{v^i_1,v^i_2, \ldots, v^i_{m_i}\}$ where~$m_i\ge 2$, such that
    \begin{align*}
        |{T_{i-1}}_{x_i}(x_iv^i_{m_i})|>k && \text{and} && |{T_{i-1}}_{v^i_j}(x_iv^i_j)|\le k && \text{for all} && j\in[m_i-1].
    \end{align*}
    Consequently, for every~$y\in V({T_{i-1}}_{x_i}(x_iv^i_{m_i}))\setminus \{x_i\}$, we have~$d_{T_{i-1}}(x_i,y) \le k$, and therefore~$x_iy\in E(T_{i-1}^k)$.
    Now remove all vertices in~$V({T_{i-1}}_{x_i}(x_iv^i_{m_i}))\setminus \{x_i\}$ from~$T_{i-1}$ to obtain a tree~$T_i$.

    Since~$T$ is a finite graph, we obtain a tree~$T_\ell$ for~$\ell\in \mathbb{Z}$ where~$\rad(T_\ell)\le k$. 
    Let~$\ell$ be the smallest such integer.
    We split into two cases depending on the value of~$\diam(T_\ell)$.
    \begin{itemize}
        \item Suppose that $\diam(T_\ell)<k+1$.
        We claim that this implies~$\rad(T_{\ell-1})\le k$, contradicting the choice of~$\ell$.
        Since~$x_{\ell}$ is a leaf in~$T_\ell$, for every vertex~$y\in T_\ell$, we have
        \[d_{T_{\ell-1}}(x_{\ell},y) = d_{T_\ell}(x_{\ell},y) \le k,\]
        where the inequality follows from the assumption that $\diam(T_\ell)<k+1$.
        Moreover, by construction, all remaining vertices~$z$ of $T_{\ell-1}$ must also satisfy the inequality~$d_{T_{\ell-1}}(x_{\ell},z) \le k$, since these are precisely the vertices deleted to obtain~$T_\ell$.
        Consequently, $\rad(T_{\ell-1})\le k$, a contradiction.
        Therefore, this case cannot occur.
        \item Suppose that $\diam(T_\ell)\ge k+1$.
        Then~$T_\ell$ satisfies the conditions of \NV{this} lemma. 
        Since~$\rad(T_\ell)\le k$, we have from the arguments used at the start of the proof, that~$T_\ell^k$ contains a spanning tree~$S_\ell$ that is~$(k+1)^+$-branching (this is in particular a star graph). 
        For each~$i= \ell, \ell-1, \ldots, 1$, apply the following (\YM{see \Cref{fig:GraphPowerHask-angular} for an illustration of the construction}):
        \begin{itemize}
            \item add vertices from~$V(T_{i-1})\setminus V(T_{i})$, and add an edge from~$x_i$ to each of these vertices;
            \item call the resulting tree~$S_{i-1}$.
        \end{itemize}
        We claim that~$S_{i-1}$ is a spanning tree of~$T_{i-1}^k$ that is~$(k+1)^+$-branching.
        But this is immediate by choice of~$x_i$.
        In particular,
        \[|V(T_{i-1})\setminus V(T_{i})| = |{T_{i-1}}_{x_i}(x_iv^i_{m_i})| - 1 \ge k,\]
        and as~$x_i$ is a leaf in tree~$T_i$, it must be of degree at least~$k+1$ in~$T_{i-1}$.
        Thus, we obtain a spanning tree~$S_0$ of~$T^k_0 = T^k$ that is~$(k+1)^+$-branching, which is what we wanted.\qedhere
    \end{itemize}
\end{proof}

We show that \Cref{lem:GraphPowerHask-angular} is tight in the following manner.

\begin{observation}
    For any \rev{positive} integer~$k$, 
    the \rev{path} graph \rev{power}~$P_{2k+2}^k$ does not contain a~$(k+2)^+$-branching spanning tree.
\end{observation}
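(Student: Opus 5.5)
Label the path $P_{2k+2}$ as $u_1,u_2,\ldots,u_{2k+2}$, so that $u_iu_j\in E(P_{2k+2}^k)$ exactly when $|i-j|\le k$. Suppose for contradiction that $S$ is a $(k+2)^+$-branching spanning tree of $P_{2k+2}^k$. We argue by counting: how many internal vertices can $S$ have, and what degree can each of them reach?

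\textbf{Step 1: $S$ is not a star.} A star spanning tree needs a centre adjacent to all other $2k+1$ vertices. In $P_{2k+2}^k$ the vertex $u_i$ has degree
\[
\min(i-1,k)+\min(2k+2-i,k)\le 2k,
\]
since the two terms cannot both equal their "other" value: if $i-1\le k$ and $2k+2-i\le k$, then $2k+1\le 2k$, which is impossible. Hence no vertex has degree $2k+1$, so $S$ has at least two internal vertices.

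\textbf{Step 2: Lower bound on the degree sum.} Let $I$ be the set of internal vertices of $S$, with $|I|=t\ge 2$. Every internal vertex has degree at least $k+2$ in $S$, and every leaf has degree $1$. Since $S$ has $2k+1$ edges,
\[
2(2k+1)=\sum_{v}\deg_S(v)\ge t(k+2)+(2k+2-t)=t(k+1)+2k+2 .
\]
This gives $t(k+1)\le 2k$, so $t\le 1$, contradicting $t\ge 2$.

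So Step 2 alone finishes the proof once Step 1 is in place. Even more directly, a tree on $2k+2$ vertices with two or more vertices of degree at least $k+2$ needs more than $2k+1$ edges; the degree-sum count makes this exact. I expect no real obstacle. The only point needing care is Step 1, namely the bound $\deg(u_i)\le 2k$ in $P_{2k+2}^k$, which rules out the single-internal-vertex case.
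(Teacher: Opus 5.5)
Your proof is correct and follows the paper's argument. The paper rules out a star spanning tree because $\rad(P_{2k+2})=k+1>k$, which is equivalent to your degree bound in $P_{2k+2}^k$; that bound is in fact immediate, since each of the two minima is at most $k$. The paper then notes that two internal vertices of degree at least $k+2$ force at least $2(k+2)>2k+2$ vertices, which is exactly what your handshake count shows.
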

\begin{proof}
    Since~$\rad(P_{2k+2}) = k+1$, any spanning tree of~$P_{2k+2}^k$ must contain at least 2 internal vertices.
    If~$P_{2k+2}^k$ contains a~$(k+2)^+$-branching spanning tree, then it must contain at least~$2(k+2) = 2k+4 > 2k+2$ vertices, which is not possible.
\end{proof}

\subsection{Burning number bound of \texorpdfstring{$k^+$}{k+}-branching trees}\label{subsec:BBranching}

The following is a generalisation of Lemma 3 in \cite{murakami2024burning}, which states that we can bound the number of internal vertices depending on the total number of vertices.

\begin{lemma}\label{lem:k-angInt}
    Let~$k \geq 2$ and~$I \geq 0$ be integers and let~$T$ be a~$k^{+}$-branching tree.
    If~$|T| \leq (k-1)(I+1)+1$ then~$T$ contains at most~$I$ internal vertices.
    Equivalently, if~$T$ contains at least~$I$ internal vertices, then~$|T|\ge I(k-1)+2$.
\end{lemma}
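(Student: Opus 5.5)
We prove the second (equivalent) formulation by a degree-counting argument, and then note that it implies the first by contraposition. Let $T$ be a $k^+$-branching tree with $n=|T|$ vertices, $L$ leaves and $J$ internal vertices, so $n = L + J$. Counting edges via the handshake lemma gives
\[
2(n-1) \;=\; \sum_{v\in V(T)} \deg(v) \;\ge\; L + kJ \;=\; (n-J) + kJ,
\]
since every leaf has degree $1$ and every internal vertex has degree at least $k$. Rearranging gives $n \ge (k-1)J + 2$.

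There is a degenerate case to handle separately. If $T$ has a single vertex, then $n=1$, $J=0$ and $\deg(v)=0$, so the equality $L=1$ fails. In this case the claim ``$J\ge I$ implies $n\ge I(k-1)+2$'' only needs checking for $I=0$, where it would require $n\ge 2$. I would either note that the paper's trees are tacitly on at least two vertices, or observe that for $n=1$ the first formulation is trivial, since $J = 0 \le I$. For $n \ge 2$, every vertex has degree at least $1$, so the computation above is valid.

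The second statement now follows. If $T$ has at least $I$ internal vertices, then $J \ge I$, and since $k-1\ge 1 > 0$ we get $n \ge (k-1)J+2 \ge (k-1)I+2$.

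For the first statement, suppose $T$ had at least $I+1$ internal vertices. Applying the bound with $I+1$ in place of $I$ gives $n \ge (k-1)(I+1)+2 > (k-1)(I+1)+1$, contradicting the hypothesis. Hence $T$ has at most $I$ internal vertices.

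There is no real obstacle in this argument. The only care needed is the single-vertex tree, where the notion of a leaf as a degree-$1$ vertex breaks down, and making the equivalence of the two formulations explicit via the shift $I \mapsto I+1$.
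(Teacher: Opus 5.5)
Your proof is correct and follows the paper's argument: the handshake lemma, with leaves of degree $1$ and internal vertices of degree at least $k$, gives $2(|T|-1)\ge |T|+(k-1)J\ge |T|+(k-1)I$. Your explicit derivation of the first formulation via $I\mapsto I+1$ fills in a step the paper leaves implicit. The single-vertex worry does not arise under the paper's definitions: a degree-$0$ vertex counts as internal, so the one-vertex tree is not $k^+$-branching.
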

\begin{proof}
    \SK{Suppose that $T$ contains at least $I$ internal vertices.
    Then $T$ has at most $|T|-I$ leaves. Furthermore, since $T$ is $k^{+}$-branching, we obtain
    \[\sum_{v \in V(T)} \deg(v) \ge 1 \cdot (|T|-I) + k \cdot I = |T| + I(k-1).\]
    Moreover, by the handshaking lemma, we have $\sum_{v \in V(T)} \deg(v) = 2(|T|-1)$.
    Hence, the above inequality yields $2(|T|-1) \ge |T| + I(k-1)$, implying~$|T|\ge I(k-1)+2$, as desired.}
\end{proof}

In the proof of the following results, we require a notion of graph burning 
where multiple sources can be burned in round 1. 
Let~$G$ be a graph.
For~$U\subseteq V(G)$ and 
vertices~$x_i\in V(G)$, let~$M = (U\cup\{x_1\}, x_2, \ldots, x_k)$ be a 
sequence. In round 1, burn all vertices in the set~$U\cup\{x_1\}$; in round 
$i$ for~$i\ge 2$, proceed as in the traditional burning \NV{process}. We call~$M$ a 
\emph{modified burning sequence} for~$G$ if all vertices of~$G$ are burned 
after round~$k$. The \emph{modified burning number}~$b^{U}(G)$ of~$G$ is the 
length of a shortest modified burning sequence for~$G$, with some 
set~$U\subseteq V(G)$.

The following is a generalisation of Lemma~4 in \cite{murakami2024burning}.
Let~$yz$ be an edge of a graph~$G$. 
By \emph{contracting}~$yz$, we mean deleting~$y$ and adding an edge~$zx$ for every~$x\in N^G_1(y)\setminus\{z\}$.

\begin{lemma}\label{lem:HITwithDk-1Burned}
	Let~$T$ be a~$(k-1)^+$-branching tree with \SK{exactly} one degree-$(k-1)$ vertex~$y$, and suppose~$y$ has a non-leaf neighbour~$z$.
    Let~$T'$ be the~$k^+$-branching tree obtained from~$T$ by contracting~$yz$. 
    Then
	\[b^{\{y\}}(T) \le b(T').\]
\end{lemma}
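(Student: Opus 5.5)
The plan is to take an optimal burning sequence of $T'$ and reuse it verbatim in $T$, with $y$ added to the first round; the extra source at $y$ will absorb any loss caused by undoing the contraction. Let $b = b(T')$ and let $(x_1,\ldots,x_b)$ be a burning sequence for $T'$. Every $x_i$ is a vertex of $T$, since $V(T') = V(T)\setminus\{y\}$. I claim that $M=(\{y\}\cup\{x_1\},x_2,\ldots,x_b)$ is a modified burning sequence for $T$. If so, then $b^{\{y\}}(T)\le b$.

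The first step is to record the standard covering criterion. In a (modified) burning process of length $b$, a vertex $w$ is burned by round $b$ if and only if $d(s,w)\le b-i$ for some source $s$ placed in round $i$; the vertices of $U$ count as sources placed in round $1$. For $T'$, every $w\in V(T')$ therefore has some $i$ with $d_{T'}(x_i,w)\le b-i$.

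The second step compares distances in $T$ and in $T'$ for vertices $u,w\neq y$. Let $N=N^T_1(y)\setminus\{z\}$. The graph $T'$ is obtained from $T$ by replacing the path $z\,y\,x$ by the single edge $zx$ for each $x\in N$. Take the unique $u$–$w$ path $P'$ in $T'$.
\begin{itemize}
\item If $P'$ uses none of the new edges $zx$, then $P'$ is also a path in $T$, and $d_T(u,w)=d_{T'}(u,w)$.
\item If $P'$ uses the two new edges $xz$ and $zx'$ with $x,x'\in N$, then in $T$ the segment $x\,z\,x'$ is replaced by $x\,y\,x'$. The length is unchanged, so again $d_T(u,w)=d_{T'}(u,w)$.
\item Otherwise $P'$ uses exactly one new edge. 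In this case the $u$–$w$ path in $T$ passes through $y$ and $d_T(u,w)=d_{T'}(u,w)+1$.
\end{itemize}

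The third step is the covering argument in $T$. The vertex $y$ is burned in round $1$. Now fix $w\in V(T')$ and choose $i$ with $d_{T'}(x_i,w)\le b-i$. In the first two cases of the second step, $d_T(x_i,w)\le b-i$, so $x_i$ still burns $w$ in $T$. In the third case, the $x_i$–$w$ path in $T$ passes through $y$. Because $x_i\neq y$, this gives
\[d_T(y,w)\le d_T(x_i,w)-1 = d_{T'}(x_i,w)\le b-i\le b-1.\]
Hence $w$ is burned by round $b$ from the round-$1$ source $y$. So all vertices of $T$ burn within $b$ rounds.

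The only delicate point is the case analysis in the second step. One has to check that the tree path in $T$ really routes through $y$ exactly when the $T'$ path uses a single new edge, and that using two new edges costs nothing. The hypotheses that $z$ is a non-leaf and that $y$ is the unique degree-$(k-1)$ vertex are needed only so that $T'$ is $k^+$-branching as stated; the inequality itself does not depend on them.
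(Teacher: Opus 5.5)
Your proof is correct and follows the paper's argument: burn $y$ together with $x_1$ in round~1, reuse a burning sequence of $T'$, and observe that whenever a covering distance grows in $T$, the $T$-path passes through $y$, so the round-1 source $y$ reaches the vertex in time (the paper phrases this as a contradiction, using the round $j$ in which $w$ burns in $T'$). Your explicit three-case comparison of $d_T$ and $d_{T'}$ is exhaustive because every new edge is incident to $z$, so a path uses at most two of them; it fills in the step the paper only asserts, and your remark that the remaining hypotheses serve only to make $T'$ $k^+$-branching is accurate.
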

\begin{proof}
	Let~$(x_1,\ldots, x_k)$ be a (not necessarily optimal) burning sequence  
	for~$T'$. We claim that $(\{y,x_1\}, x_2,\ldots, x_k)$ is a modified 
	burning sequence for~$T$. This would imply that any burning sequence 
	for~$T'$ yields a modified burning sequence of the same length for~$T$, 
	from which we may conclude that $b^{\{y\}}(T) \le b(T')$.
	
	Suppose for a contradiction that $(\{y,x_1\}, x_2,\ldots, x_k)$ is not a 
	modified burning sequence for~$T$. Then there exists a vertex~$w$ 
	in~$T$ that is not burned at the end of round~$k$. Clearly~$w$ cannot be 
	one of the sources. Since~$(x_1,\ldots, x_k)$ is a burning sequence 
	for~$T'$,~$w$ is a burned vertex at the end of round~$k$. Suppose that~$w$ 
	becomes burned in~$T'$ in round~$j$ for some~$j\le k$. Since~$w$ is not a 
	source, there must be a source~$x_i$ with~$i<j$ such that~$d_{T'}(w,x_i) 
	=j-i$.
	
	Because~$w$ remains unburned in~$T$, we must have that $d_{T}(w,x_i) 
	>j-i$. Then the path from~$w$ to~$x_i$ in~$T$ must contain the 
	vertex~$y$, as this is the only difference between trees~$T$ and~$T'$. It 
	follows that~$d_{T}(w,x_i) = j-i+1$. But then~$d_{T}(w,y)\le j-i$. This 
	would mean that~$w$ becomes a burned vertex in~$T$ no later than 
	round~$1+j-i$, since~$y$ is burned in round 1. Since~$1+j-i\le j\le k$, this 
	means that~$w$ is burned in~$T$ at the end of round~$k$. This gives the 
	required contradiction.
\end{proof}

We now give a burning number bound for $k^+$-branching trees on~$n$ vertices.
This is an extension of Theorem~1 of~\cite{murakami2024burning}, which corresponds to the case~$k=3$.

\begin{theorem}\label{thm:k-angBoundle}
    Let~$k\ge3$ be an integer and let~$T$ be a~$k^+$-branching tree on~$n$ vertices.
    Then $b(T) \le \BoundC$, 
    where $\displaystyle c=\frac{4(k-2)}{(k-1)^2}$.
\end{theorem}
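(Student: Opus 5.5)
We prove the statement by strong induction on~$n$, in the style of Theorem~1 of~\cite{murakami2024burning}. Write $t=\BoundC$. If~$\rad(T)\le t-1$, then burning a centre in round~1 burns everything by round~$t$, so we are done. In particular, small cases and stars are immediate.

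Otherwise we peel off a piece of radius at most~$t-1$ around a suitable vertex, burn it with the first source, and recurse on the rest with one fewer round. Apply \Cref{lem:TheRightBridgeStrong} with a threshold $p$ chosen so that the subtrees hanging off the chosen vertex~$v$ have depth at most~$t-2$. Taking $p$ on the order of~$(k-1)(t-2)$ ensures this, since a branch of depth at least $t-1$ in a $k^+$-branching tree forces many vertices by \Cref{lem:k-angInt}. We obtain~$v$ with neighbours $v_1,\dots,v_m$ such that every side branch $T_{v_i}(vv_i)$, $i<m$, has at most~$p$ vertices and hence lies within distance~$t-1$ of~$v$. The remaining part $T_v(vv_m)$ has more than~$p$ vertices.

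Burning $v$ as the first source then burns $W=V(T_{v}(vv_m))$ within~$t$ rounds. We need this removed part to be large, roughly at least $(k-1)(t-1)+1$ or even of size about~$\tfrac{(k-1)^2}{4(k-2)}(2t-1)$. The point is that removing $W$ must drop $cn$ below $(t-1)^2$, which requires $|W|\ge (2t-1)/c$. To guarantee this, choose $p$ as large as the depth constraint allows. By the contrapositive of \Cref{lem:k-angInt}, a branch $T_{v_i}(vv_i)$ whose depth exceeds $t-2$ contains a path of $t-2$ internal vertices, and since $k^+$-branching forces roughly $(k-2)$ extra leaves per internal vertex, that branch has size at least about $(k-1)(t-2)+2$. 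So any branch of size at most $p\approx (k-1)(t-2)+1$ has depth at most $t-2$. We also get $|W|\ge p+1$ plus the contributions from $v$'s degree at least~$k$.

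The remainder $T'=T_{v_m}(vv_m)$ is the tree left after deletion. In $T'$, the vertex $v_m$ may have degree $\deg(v_m)-1$, which can be $k-1$. This is exactly the situation of \Cref{lem:HITwithDk-1Burned}. We reattach $v$ as a burned leaf, i.e.\ we treat~$v$ as already burned, and use the modified burning number. Alternatively we contract the edge to $v_m$ to get a genuine $k^+$-branching tree $T''$ on fewer vertices. Then $b(T)\le 1+b(T'')$, with $v$'s source also covering near $v_m$, and induction gives $b(T'')\le\lceil\sqrt{c|T''|}\,\rceil$. It remains to verify $c|T''|\le (t-1)^2$, i.e.\ $|T|-|T''|\ge (2t-1)/c=\frac{(k-1)^2(2t-1)}{4(k-2)}$.

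\textbf{Main obstacle.} The key step is the counting estimate that the removed part has at least $\frac{(k-1)^2(2t-1)}{4(k-2)}$ vertices. The constant $c$ suggests the optimum comes from balancing two choices, since $\frac{(k-1)^2}{4(k-2)}=\max_{x}x(k-1-x)/(k-2)$-type AM–GM. Concretely, I would burn a vertex at distance $r$ from the root of the removed piece with radius split between depth and spread, and optimise: a removed piece whose depth is $d$ gives at least about $(k-2)d$ leaves plus $d$ path vertices. Two sources, the current one and the gain from reducing $t$, yield the quadratic $(t-1)^2\ge c(n-|W|)$. I would first try to prove directly that if $\rad(T)\ge t$ then the peeled piece has at least $(k-1)(t-1)+1\ge(2t-1)/c$ vertices. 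This holds when $c\ge 2/(k-1)$ roughly, i.e.\ it follows from $(k-1)^2\cdot 2\ge 4(k-2)\cdot\ldots$. Only if that inequality fails for small $t$ would I fall back to the case analysis with \Cref{lem:HITwithDk-1Burned}, handling the degree-$(k-1)$ vertex created at $v_m$.
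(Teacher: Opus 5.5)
Your overall architecture matches the paper's. You peel off a piece around $v$ via \Cref{lem:TheRightBridgeStrong}, burn $v$ first, and recurse on $T_{v_m}(vv_m)$ with $v_m$ pre-burned, using \Cref{lem:HITwithDk-1Burned} when $v_m$ drops to degree $k-1$. However, the step you call the main obstacle is left open, and it is open only because your threshold is off by one in the depth count. If $T_{v_i}(vv_i)$ has depth at least $t-1$ below $v_i$, the path $v_i=u_0,\dots,u_{t-1}$ contains $t-1$ internal vertices of $T$, not $t-2$: $v_i$ itself is internal because it is adjacent to $v$. The clean way to see this is as follows. Let $H_i=V(T_{v_i}(vv_i))$. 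The tree $T[H_i\cup\{v\}]$ is $k^+$-branching, since $v$ is a leaf there and every vertex of $H_i$ keeps its degree from $T$. So if $|H_i|\le (k-1)(t-1)$, \Cref{lem:k-angInt} gives at most $t-2$ internal vertices. Every interior vertex of a path from $v$ into $H_i$ is internal in $T[H_i\cup\{v\}]$, so every vertex of $H_i$ lies within distance $t-1$ of $v$. Hence you may take $p=(k-1)(t-1)$ rather than $p\approx(k-1)(t-2)+1$.

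This is not cosmetic. With your $p$, \Cref{lem:TheRightBridgeStrong} only yields $|W|\ge (k-1)(t-2)+2$, which is $k-2$ short of what you need. Adding ``contributions from $v$'s degree'' gains nothing, because those vertices are already counted in $W$. For instance, take $k=3$ (so $c=1$) and $n=t^2$. If $v_m$ keeps degree at least $3$ in the remainder, the remainder can have $(t-1)^2+1$ vertices, and induction only gives $\lceil\sqrt{(t-1)^2+1}\,\rceil=t$ further rounds, so the argument does not close. With $p=(k-1)(t-1)$, the bound $|W|\ge (k-1)(t-1)+1$ comes for free from the lemma, and the arithmetic is immediate: $c\bigl(n-(k-1)(t-1)-1\bigr)\le t^2-c(k-1)t+c(k-2)=\bigl(t-\tfrac{2(k-2)}{k-1}\bigr)^2\le (t-1)^2$. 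This uses $c^2(k-1)^2/4=c(k-2)$ and $\tfrac{2(k-2)}{k-1}\in[1,2)$ for $k\ge 3$. No AM--GM balancing or off-centre source is needed.

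Two smaller points. First, you should verify $p\in[1,n-1)$: under $\rad(T)\ge t\ge 2$, a diametral path has at least $2t-2$ internal vertices, so $n\ge 2(k-1)(t-1)+2$ by \Cref{lem:k-angInt}. Second, \Cref{lem:HITwithDk-1Burned} is not a fallback for the counting. It is needed whenever $v_m$ has degree $k-1$ in $T_{v_m}(vv_m)$; if $v_m$ keeps degree at least $k$, you simply use $b^{\{v_m\}}\le b$ on the remainder.
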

\begin{proof}
    Let~$I$ be the number of internal vertices in~$T$.
    We split into two cases based on the \SK{value} of~$n$.
    \medskip

    \noindent\textbf{Case 1: $n\le 4(k-2)+1$.}
    First observe that
    \begin{equation}\label{eqn:cn}
        \BoundC = \left\lceil{\sqrt{\frac{4(k-2)n}{(k-1)^2}}}~\right\rceil \ge \left\lceil \frac{n}{k-1}\right\rceil.
    \end{equation}
    \Cref{lem:k-angInt} gives 
    \begin{equation}\label{eqn:nI}
        n \ge I(k-1)+2.
    \end{equation}
    Plugging \Cref{eqn:nI} into \Cref{eqn:cn} gives
    \[\BoundC\ge \left\lceil I + 2/(k-1)\right\rceil = I + 1.\]
    Arbitrarily order the internal vertices of~$T$, using~$(v_1,\ldots, v_I)$.
    As every leaf is \NV{at} distance \NV{one} from \SK{an} internal vertex, it follows that~$(v_1,\ldots, v_I, v)$ is a valid burning sequence for~$T$, for any~$v\in V(T)$.
    It follows then that,
    \[b(T) \le I+1 \le \BoundC,\]
    which is what we wanted to show.
    \medskip

    \noindent\textbf{Case 2: $n\ge4(k-2)+2$.}
    Clearly,~$I\ge1$.
    We \NV{use} induction on~$I$.
    For the base case, we consider the case~$I=1$.
    This means that~$T$ is a star graph, and thus~$b(T) = 2$.
    Since \SK{$k \ge 3$, and by taking~$n>4(k-2)$, we have}
    \begin{equation}\label{eqn:cn>1}
        \sqrt{cn}> \frac{4(k-2)}{(k-1)} \ge \SK{2}.
    \end{equation}
    \SK{Consequently,} the claim holds.

    We now assume that for all $k^+$-branching trees with at most $I-1$ internal vertices, the theorem holds.
    We show that the claim holds for~$T$, which is a~$k^+$-branching tree with~$I$ internal vertices.
    By \Cref{lem:TheRightBridgeStrong},~$T$ contains a vertex~$x$ with neighbours~$v_1,\ldots, v_m = y$ such that for $i\in[m-1]$,
    \begin{align*}
        |T_{v_i}(xv_i)|\le (k-1)(\BoundC-1) && \text{and} && |T_x(xy)|> (k-1)(\BoundC-1).
    \end{align*}
    \YM{To show that such a vertex exists, we must verify that $(k-1)(\BoundC-1)\in [1,n-1)$ to satisfy the conditions of \Cref{lem:TheRightBridgeStrong}.}
    \begin{itemize}
        \item Observe that $(k-1)(\BoundC-1)\ge 1$,
        as $k-1\ge 2$ and $\BoundC-1 \ge \SK{2}$ from \Cref{eqn:cn>1}.
        \item \YM{We now wish to show that~$(k-1)(\BoundC-1)<n-1$.
        As~$\lceil x\rceil < x+1$ for all~$x\in\mathbb{R}$, we have
        \[(k-1)(\BoundC-1) < (k-1)\sqrt{cn}.\]
        So the original claim is true if
        \[(k-1)\sqrt{cn} = \sqrt{4(k-2)n} \le n-1.\]
        As both sides are non-negative, we can square them to show that this inequality is true if the following is true.
        \[4(k-2)n \le n^2-2n+1.\]
        Simplifying, we wish to show that
        \[n^2-(4k-6)n+1 \ge0.\]
        But as~$n\ge 4(k-2)+2$ by assumption, this must be true since
        \[n^2-(4k-6)n+1 = n^2-(4(k-2)+2)n + 1 \ge n^2 - n^2 + 1 = 1 \ge 0.\]
        }
    \end{itemize}
    Hence,~$(k-1)(\BoundC-1)\in [1,n-1)$, and thus our invocation of \Cref{lem:TheRightBridgeStrong} is valid.
    We claim that the distance from $x$ to any vertex 
	in~$H_i:=V(T_{v_i}(xv_i))$ is at most~$\BoundC-1$.
    \YM{See \Cref{fig:k-angBoundle} for an illustration.} 

    Consider the induced subtree~$T_i:=T[H_i\cup \{x\}]$. 
	Since~$|H_i| \le(k-1)(\BoundC-1)$, it follows that~$|T_i| \le (k-1)(\BoundC-1) + 1$. 
    By \Cref{lem:k-angInt},~$T_i$ contains at most~$\BoundC-2$ internal vertices. 
    It follows immediately that any 
	path in~$T$ from~$x$ to a vertex in~$H_i$ is of \NV{length} at 
	most~$\BoundC-1$. 
    As this is true for all~$i\in[m-1]$, it follows that the distance from~$x$ to every vertex in~$T_x(xy)$ is at most $\BoundC-1$.

    We now claim that~$T$ can be burned in at most~$\BoundC$ rounds, by 
	burning~$x$ in round 1. 
    Observe that upon burning~$x$ in round 1, with no 
	additional sources in~$T_x(xy)$, all vertices of~$T_x(xy)$ will be burned 
	by the end of round $\BoundC$. Indeed, this occurs since the distance from~$x$ to every vertex in $T_x(xy)$ is at most $\BoundC-1$.

    The vertex~$y$ will become burned in round 2, as it is a neighbour of~$x$.
    If~$y$ is a leaf, then we are done, so suppose~$y$ is not a leaf.
	It remains to show that~$b^{\{y\}}(T_y(xy))\le \BoundC-1$. 
    We split into two subcases, depending on the degree of~$y$ in~$T_y(xy)$.
    \medskip
    
    \noindent\textbf{Subcase 1: $y$ is a degree-$(k-1)$ vertex in~$T_y(xy)$.}
    If~$y$ only has leaf neighbours, then we are done, as~$\BoundC\ge 3$ from \Cref{eqn:cn>1}. 
    So suppose~$y$ has a neighbour~$z$ that is an internal vertex. 
    \rev{Since all internal vertices in~$T_y(xy)$, excluding~$y$, are of degree at least~$k$ (since~$T$ is~$k^+$-\rev{branching}), the tree~$T_y(xy)$ is a~$(k-1)^+$-branching tree with~$y$ as its unique vertex of degree $k-1$.
    So we may apply \Cref{lem:HITwithDk-1Burned}.}
    We contract the edge~$yz$ to obtain a~$k^+$-branching tree~$T'$.
    By \Cref{lem:HITwithDk-1Burned}, we have~$b^{\{y\}}(T_y(xy))\le b(T')$. 
    \medskip
    
    \noindent\textbf{Subcase 2: $y$ is not a degree-$(k-1)$ vertex in~$T_y(xy)$.}
    Then~$T_y(xy)$ itself is a $k^+$-branching tree and we 
	have~$b^{\{y\}}(T_y(xy))\le b(T_y(xy))$. 
    \medskip
    
    We now combine the analysis for the two subcases. 
    Note that
	\[|T'| < |T_y(xy)| = n - |T_x(xy)| \le n-(k-1)(\BoundC-1)-1= n- (k-1)\BoundC + k-2.\]

    Continuing the calculation, we obtain
    \begin{align*}
        c|T_y(xy)| &\le cn - c(k-1)\BoundC + c(k-2)\\
        &\le \BoundC^2 - c(k-1)\BoundC + c(k-2)\\
        &= \left(\BoundC - c(k-1)/2\right)^2 - c^2(k-1)^2/4 + c(k-2)\\
        &= \left(\BoundC - c(k-1)/2\right)^2
    \end{align*}
    where the final equality follows \NV{from the} definition of~$c$. Finally, this gives
    \begin{align*}
        \left\lceil\sqrt{c|T_y(xy)|}\right\rceil &\le \left\lceil \BoundC - c(k-1)/2 \right\rceil\\
        &= \left\lceil \BoundC - 2(k-2)/(k-1)\right\rceil\\
        &= \BoundC - 1.
    \end{align*}
    Since~$x$ is an internal vertex,~$T'$ and~$T_y(xy)$ both contain fewer internal vertices than~$T$.
	It follows by \NV{the} induction hypothesis that~$b(T') \le \BoundC -1$ 
	and~$b(T_y(xy))\le \BoundC -1$. Therefore, we obtain $b^{\{y\}}(T_y(xy))\le 
	\BoundC -1$, and we are done.
\end{proof}

\begin{figure}[ht]
\centering
\resizebox{0.7\textwidth}{!}{%
\begin{tikzpicture}[scale=1.1, every node/.style={font=\small}]
    

\node[circle, fill, inner sep=1.5pt] (x1) at (3,3) {};
\node[below] at (x1) {$x$};

\node[circle, fill, inner sep=1.5pt] (v1) at (0,4) {};
\node[above=1pt] at (v1) {$v_1$};

\node[circle, fill, inner sep=1.5pt] (v2) at (0,3) {};
\node[above=1pt] at (v2) {$v_2$};


\node[circle, fill, inner sep=1.5pt] (vm1) at (0,2) {};
\node[below=3pt] at (vm1) {$v_{m-1}$};

\draw (x1) -- (v1);
\draw (x1) -- (v2);
\draw[densely dotted] ($(v2) + (0,-0.2)$) -- ($(vm1) + (0,0.2)$);
\draw (x1) -- (vm1);

\foreach \n in {v1,v2,vm1} {
    \draw (\n) -- ++(-0.6,0.2);
    \draw (\n) -- ++(-0.6,0);
    \draw (\n) -- ++(-0.6,-0.2);
    \draw[densely dotted] ($( \n ) + (-0.5,0)$) -- ++(0,-0.166);
}

\node[circle, fill, inner sep=1.5pt] (y1) at (4,3) {};
\node[below] at (y1) {$y$};
\draw (x1) -- (y1);
\foreach \dy in {0.4,0,-0.4} {
    \draw (y1) -- ++(0.8,\dy);
    \draw[densely dotted] ($(y1) + (0.6,0)$) -- ++(0,-0.3);
}

\node[draw, dashed, rectangle, rounded corners=5pt, minimum width=1.5cm, minimum height=0.9cm] (box1) at (-0.2,4) {};
\node[anchor=west, font = \tiny] at ([xshift=1pt]box1.east) {$T_{v_1}(xv_1)$};

\node[draw, rectangle, rounded corners=5pt, minimum width=4.8cm, minimum height=1.1cm] (box2) at (1.2,3) {};
\node[anchor=south, font = \tiny] at ([xshift=-5pt]box2.north east) {$T_2$};

\node[draw, dashed, rectangle, rounded corners=5pt, minimum width=1.5cm, minimum height=0.9cm] (box) at (-0.2,3) {};
\node[anchor=west, font=\tiny] at ([xshift=1pt,yshift=5pt]box.east) {$T_{v_2}(xv_2)$};

\node[draw, dashed, rectangle, rounded corners=5pt, minimum width=1.5cm, minimum height=0.9cm] (box3) at (-0.2,2) {};
\node[anchor=west, font=\tiny] at ([xshift=1pt]box3.east) {$T_{v_{m-1}}(xv_{m-1})$};

\node[draw, rectangle, rounded corners=5pt, minimum width=5.2cm, minimum height=3.5cm] (box0) at (1.15,3) {};
\node[anchor=west, font=\footnotesize] at ([xshift=2pt,yshift=-5pt]box0.north east) {$T_x(xy)$};
\end{tikzpicture}
}
\caption{A visualisation of the proof of \Cref{thm:k-angBoundle}.
We indicate subtrees with rectangular boxes.
In particular, dashed boxes are used to indicate the trees~$T_{v_1}(xv_1), T_{v_2}(xv_2)$, and~$T_{v_{m-1}}(xv_{m-1})$; solid boxes are used to indicate the trees~$T_2$ and~$T_x(xy)$.
$T_2$ contains at most~$\BoundC -2$ internal vertices.
All vertices in~$T_x(xy)$ are distance at most~$\BoundC-1$ away from~$x$.
Therefore, selecting~$x$ as a first source burns all vertices of~$T_x(xy)$ in at most~$\BoundC$ rounds, without selecting additional sources.
}
\label{fig:k-angBoundle}
\end{figure}

\subsection{Burning number bound of graph powers}\label{subsec:BGraphPowers}

We recall the following lemma on spanning subtrees.

\begin{lemma}[Corollary 6 of \cite{bonato2016burn}]\label{lem:SpanningTree}
    For a graph~$G$, we have
    \[b(G) = \min\{b(T): T \text{ is a spanning subtree of } G\}.\]
\end{lemma}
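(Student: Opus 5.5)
The equality follows from two inequalities. The first comes from comparing distances in $G$ and in a spanning tree. The second comes from recording how the fire spreads during an optimal burning of $G$, which yields a spanning forest.

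\textbf{Step 1: $b(G)\le b(T)$ for every spanning subtree $T$.} Since $V(T)=V(G)$ and $E(T)\subseteq E(G)$, we have $d_G(u,v)\le d_T(u,v)$ for all $u,v$. A vertex $w$ is burned by the end of round $k$ under a sequence $(x_1,\ldots,x_k)$ exactly when some $i$ satisfies $d(w,x_i)\le k-i$. Hence any burning sequence for $T$ is also a burning sequence for $G$ of the same length. Taking the minimum over $T$ gives $b(G)\le\min_T b(T)$.

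\textbf{Step 2: some spanning tree $T$ has $b(T)\le b(G)$.} Fix an optimal burning sequence $(x_1,\ldots,x_k)$ for $G$, with $k=b(G)$, and run the process. Each vertex $w$ receives a burn round $r(w)\in[k]$.
\begin{itemize}
\item If $w$ was burned by spreading, it has a neighbour $p(w)$ with $r(p(w))=r(w)-1$. We fix one such neighbour and call it the parent of $w$.
\item Otherwise $w$ was first burned as a source, i.e.\ $w=x_i$ unburned before round $i$, and $w$ gets no parent. A source that was already burned when chosen keeps the parent it had from spreading.
\end{itemize}
Since $r$ strictly decreases along parent pointers, the edges $wp(w)$ form a forest $F$ in $G$, and every component of $F$ is rooted at a source $x_i$ with $r(x_i)=i$. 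By induction along parent chains, every vertex $w$ in the component of $x_i$ satisfies $d_F(w,x_i)=r(w)-i\le k-i$.

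\textbf{Step 3: extend $F$ and conclude.} Because $G$ is connected, we can add edges of $G$ between components of $F$ until we obtain a spanning tree $T\supseteq F$. Adding edges cannot increase distances, so $d_T(w,x_i)\le d_F(w,x_i)\le k-i$ for the root $x_i$ of the component containing $w$. Thus $(x_1,\ldots,x_k)$ burns $T$ within $k$ rounds, giving $b(T)\le b(G)$. Combined with Step 1, this proves the equality.

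The only delicate point is the bookkeeping in Step 2. One must check that $F$ is genuinely acyclic, which holds because the burn round strictly decreases along parent edges. One must also check that sources which were already burned when selected cause no trouble: they are simply non-root vertices of $F$, and the sequence remains valid on $T$ regardless.
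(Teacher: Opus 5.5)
Your proof is correct, and it is essentially the standard argument behind the cited result. The paper gives no proof of its own but imports the lemma from Bonato et al.; there, the forest $F$ you build from the spread of the fire is their rooted tree partition into trees of height at most $k-i$ rooted at the sources, which is then joined into a spanning tree. Your Step 1 ($d_G\le d_T$ makes every burning sequence of $T$ one of $G$) relies on the paper's convention that a chosen source may already be burned, and that convention is in force here.
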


We are now ready to prove the main theorem.

\begin{theorem}\label{thm:GraphPowerBN}
    Let~$G$ be a graph on~$n$ vertices, and let~$k$ be \NV{an} integer where~$2\le k\le \diam(G)$.
    Then \[b(G^k)\le \left\lceil\sqrt{\frac{4(k-1)}{k^2}n}~\right\rceil.\]
\end{theorem}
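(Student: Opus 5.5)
The plan is to reduce to trees and then chain \Cref{lem:GraphPowerHask-angular}, \Cref{thm:k-angBoundle} and \Cref{lem:SpanningTree}.

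Fix a spanning tree $T$ of $G$. Since $T$ is a subgraph of $G$, we have $d_T(u,v)\ge d_G(u,v)$ for all $u,v$. This gives two facts: $\diam(T)\ge \diam(G)\ge k$, and $T^k$ is a spanning subgraph of $G^k$. Consequently any spanning tree of $T^k$ is also a spanning tree of $G^k$. By \Cref{lem:SpanningTree}, it then suffices to exhibit a spanning tree $S$ of $T^k$ with $b(S)$ at most the claimed bound.

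\textbf{Main case: $k\le \diam(T)-1$.} Then \Cref{lem:GraphPowerHask-angular} applies directly and yields a $(k+1)^+$-branching spanning tree $S$ of $T^k$ on $n$ vertices. Because $k\ge 2$, we have $k+1\ge 3$, so \Cref{thm:k-angBoundle} applies with $k+1$ in place of $k$. The constant becomes
\[c=\frac{4((k+1)-2)}{((k+1)-1)^2}=\frac{4(k-1)}{k^2},\]
so $b(S)\le \lceil\sqrt{4(k-1)n/k^2}~\rceil$. Combined with \Cref{lem:SpanningTree}, this gives $b(G^k)\le b(S)$ and the theorem.

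\textbf{Boundary case: $\diam(T)=k$.} This is the only mildly delicate point, since the hypothesis allows $k=\diam(G)$, which is just outside the range of \Cref{lem:GraphPowerHask-angular}. Here $\rad(T)\le\lceil k/2\rceil\le k$, so there is a vertex $u$ at distance at most $k$ in $T$ from every other vertex. Hence the star centred at $u$ is a spanning tree of $T^k\subseteq G^k$, and $b(G^k)\le 2$. It remains to check that the right-hand side is at least $2$, i.e.\ that $4(k-1)n/k^2>1$. Since $n\ge \diam(G)+1\ge k+1$, it suffices that $4(k-1)(k+1)>k^2$, i.e.\ $3k^2>4$, which holds for $k\ge 2$.

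I expect no step to be a genuine obstacle. The substantive work is already in \Cref{lem:GraphPowerHask-angular} and \Cref{thm:k-angBoundle}; the only care needed is the index shift $k\mapsto k+1$ in the constant $c$ and the boundary case $k=\diam(G)$.
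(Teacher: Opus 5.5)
Your proof is correct and follows the paper's argument: you chain \Cref{lem:GraphPowerHask-angular} (justified by $\diam(T)\ge\diam(G)$), \Cref{thm:k-angBoundle} with $k+1$ in place of $k$, and \Cref{lem:SpanningTree}, and you treat the boundary case separately. The only differences are cosmetic. You split on $\diam(T)$ and use a star in $T^k$, whereas the paper splits on $\diam(G)$ and notes $G^k\cong K_n$ when $k=\diam(G)$; your check $4(k-1)n/k^2>1$ works just as well as the paper's $\ge 3$.
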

\begin{proof}
    Suppose first that~$k=\diam(G)$.
    Then~$G^k$ is isomorphic to the complete graph~$K_n$, and thus~$b(G^k) = 2$.
    On the other hand, we have
    \[\frac{4(k-1)n}{k^2}\ge \frac{4(k-1)(k+1)}{k^2}\ge3,\]
    where the first inequality follows from~$n\ge \diam(G) + 1$, and the second inequality follows as $\frac{k^2-1}{k^2}$ is monotonically increasing for~$k\ge 2$ with value~$3/4$ when~$k=2$.
    Then we have
    \[b(G^k) = 2 \le \left\lceil\sqrt{\frac{4(k-1)}{k^2}n}~\right\rceil,\]
    as required.

    Now suppose that~$2\le k <\diam(G)$.
    Take any spanning tree~$T$ of~$G$. 
    By \Cref{lem:GraphPowerHask-angular},~$T^k$ contains a spanning tree $S$ that is~$(k+1)^+$-branching.
    Since~$T^k$ is a subgraph of~$G^k$,~$S$ is also a spanning tree of~$G^k$.
    Then we have
    \[b(G^k) \le b(S) \le  \left\lceil\sqrt{\frac{4(k-1)}{k^2}n}~\right\rceil,\]
    where the first and second inequalities follow from~\Cref{lem:SpanningTree} and~\Cref{thm:k-angBoundle}, respectively.
\end{proof}

We highlight the~$k=2$ case of \Cref{thm:GraphPowerBN}.

\begin{corollary}\label{cor:GraphSquareBN}
    Let~$G$ be a graph on~$n$ vertices.
    Then \(b(G^2)\le \left\lceil\sqrt{n}~\right\rceil.\)
\end{corollary}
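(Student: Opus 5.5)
The plan is to specialise \Cref{thm:GraphPowerBN} to $k=2$ and then take care of the graphs to which that theorem does not apply. Setting $k=2$ in the constant gives $\frac{4(k-1)}{k^2}=\frac{4}{4}=1$, so the bound becomes exactly $\lceil\sqrt{n}~\rceil$. \Cref{thm:GraphPowerBN} requires $2\le k\le \diam(G)$, so it covers every graph $G$ with $\diam(G)\ge 2$.

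It remains to handle $\diam(G)\le 1$, where $G$ is a complete graph $K_n$ and $G^2=G=K_n$. If $n=1$, then $b(G^2)=1=\lceil\sqrt1~\rceil$. If $n\ge2$, burning any vertex in round 1 burns everything by round 2, so $b(K_n)\le 2$. For $n\ge 2$ we also have $\lceil\sqrt n~\rceil\ge 2$, which settles this case.

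Alternatively, the diameter condition can be avoided by working through spanning trees. Take a spanning tree $T$ of $G$, note that $T^2$ is a spanning subgraph of $G^2$, and apply \Cref{lem:GraphPowerHask-angular} with $k=2$ when $\diam(T)\ge3$. This yields a $3^+$-branching spanning tree of $G^2$. \Cref{thm:k-angBoundle} with $k=3$ gives $c=4/4=1$, and \Cref{lem:SpanningTree} then finishes the argument. When $\diam(T)\le 2$, $T$ is a star, so $T^2$ is complete and the bound of $2$ rounds applies as above.

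There is no real obstacle here. The only step needing care is checking the degenerate small-diameter and small-$n$ cases outside the hypotheses of \Cref{thm:GraphPowerBN}.
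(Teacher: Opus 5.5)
Your proposal is correct and matches the paper, which obtains the corollary simply by setting $k=2$ in \Cref{thm:GraphPowerBN}, where $4(k-1)/k^2=1$. Your separate treatment of complete graphs ($\diam(G)\le 1$) is correct: these fall outside that theorem's hypothesis $2\le k\le\diam(G)$, and the paper leaves this case implicit.
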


\section{Larger number of vertices}

In this section, we examine how the burning number bound for~$k^+$-branching trees of \Cref{thm:k-angBoundle} compares to the state of the art from the graph burning literature.
We first recall these results.

\begin{theorem}[Theorem 1 of \cite{bastide2023improved}]\label{thm:Bastide}
    If~$G$ is a connected graph on~$n$ vertices, then
    \[b(G) \le \left\lceil \sqrt{4n/3} \right\rceil + 1.\]
\end{theorem}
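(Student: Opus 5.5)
The statement is Theorem~1 of \cite{bastide2023improved} and is only quoted here, so what follows is how I would reconstruct its proof. I have not checked the key counting step.

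\textbf{Reduction to trees and to a covering problem.} By \Cref{lem:SpanningTree} it suffices to prove the bound for trees. So let $T$ be a tree on $n$ vertices and set $t=\lceil\sqrt{4n/3}\,\rceil+1$. I will use the standard reformulation of burning as covering: $(x_1,\ldots,x_t)$ is a burning sequence exactly when $V(T)=\bigcup_{i} N_{t-i}[x_i]$. It is enough to cover with balls of radii $t-1,t-2,\ldots,0$, each used at most once; if a ball turns out to be unnecessary, its source can be placed anywhere. A ball of radius $r$ can contain up to $2r+1$ vertices of a path. The target is weaker: amortised over the radii, each ball should remove about $\tfrac32 r$ new vertices. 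Then $\sum_{r<t-1}\tfrac32 r\approx \tfrac34 (t-1)^2\ge n$, and the additive slack of $+1$ (and the ceiling) absorbs the boundary terms.

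\textbf{Peeling induction.} I would induct on $n$. The claim to prove is that the vertices of $T$ can be covered by balls of distinct radii $r_1>r_2>\cdots$, with $r_1\le t-1$. At the current largest available radius $r$, apply \Cref{lem:TheRightBridgeStrong} with threshold $p$ of order $r$ to obtain a vertex $x$ and a neighbour $y$. Every branch hanging off $x$ other than the one through $y$ is then small, and the part $T_x(xy)$ is large. I would place a ball of radius $r$ centred at $x$, or at a vertex shifted a suitable distance towards $y$. This ball should swallow a connected chunk containing $x$ and all of the small branches, after which the remainder $T_y(xy)$ is still a tree and the induction continues with radius $r-1$.

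\textbf{Main obstacle: guaranteeing $\tfrac32 r$ vertices per ball.} The difficulty is that a ball of radius $r$ may capture far fewer than $\tfrac32 r$ vertices. This happens when $T$ near $x$ is a long bare path, where the ball captures only about $r$ vertices if it is badly centred. To handle this I would process the radii in pairs $(r,r-1)$ and aim to remove at least $3r-O(1)$ vertices with two balls. If the region near $x$ is path-like, both balls are placed along the path so that they capture about $2r$ and $2r-2$ vertices. Otherwise the branching near $x$ already gives a single ball many more than $\tfrac32 r$ vertices. Two checks remain: that this dichotomy can always be forced by choosing $p$ suitably (I would try $p\approx 3r/2$), and that the remainder after removal stays connected. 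These checks are where I expect most of the work. The final small pieces would be handled by the $+1$ term together with a direct check for $n$ small.
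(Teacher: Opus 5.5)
The paper gives no proof of this statement; it is quoted from Bastide et al. Your sketch therefore has to stand on its own, and it does not. The reduction to spanning trees via \Cref{lem:SpanningTree}, the reformulation as covering $V(T)$ by balls of radii $t-1,\dots,0$, and the arithmetic showing that about $3r$ new vertices per pair $(r,r-1)$ would suffice are all routine bookkeeping. The entire content of the theorem is the removal step you leave open, and the mechanism you propose for it does not work. \Cref{lem:TheRightBridgeStrong} with threshold $p\approx 3r/2$ bounds the \emph{number} of vertices in each small branch at $x$, not its depth. Such a branch can be a bare path on about $3r/2$ vertices, which is not contained in $N_r[x]$, and shifting the centre towards $y$ only moves it further away. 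If several such branches hang at $x$, the ball at $x$ leaves their tips uncovered, and after deleting what it covers the remainder is not a tree. In this paper the analogous step works only because of the $k^+$-branching hypothesis: \Cref{lem:k-angInt} converts the size bound from \Cref{lem:TheRightBridgeStrong} into a depth bound. General trees have no such conversion.

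More seriously, the inductive step you are aiming for is false, so no choice of $p$ can rescue it. Let $S$ be a spider with centre $c$ and $L\ge 4$ legs, each a path on exactly $r+1$ vertices.
\begin{itemize}
\item Distinct tips are at distance $2r+2$, so a ball of radius at most $r$ contains at most one tip.
\item Let $X$ be covered by two such balls with $S-X$ connected. $X=V(S)$ is impossible because of the $L\ge 4$ tips.
\item If $c\in X$, then $S-X$ lies inside one leg, so $X$ contains at least three whole legs and hence three tips, which is again impossible.
\item So $c\notin X$, and $X$ is a union of at most two end segments of legs. This gives $|X|\le 2r+2$, and $|X|\le r+1$ for a single ball.
\end{itemize}
With $L\approx 3r/4$, the spider has about $3r^2/4$ vertices, which is the threshold of your induction. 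For instance, $r=11$ and $L=7$ give $n=85\le 3\cdot 11^2/4$, but $85-24>3\cdot 9^2/4$, so after the first pair the remaining tree is too large for the hypothesis at radius $9$. Yet $S$ is easily covered: put the radius-$r$ ball at $c$, then spend the remaining $r$ balls on the $L$ leftover tips. That move disconnects the tree; one ball removes about $3r^2/4$ vertices and each later ball removes only one. So you need a genuinely stronger inductive statement than ``at most about $3r^2/4$ vertices, connected remainder, fixed quota per radius''. For example, one that allows forests with a global accounting, or one that also tracks the depth of the remaining tree. You also need an actual proof of the corresponding removal lemma.
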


\begin{theorem}[Theorem 1.2 of \cite{norin2024burning}]\label{thm:Norin}
    If~$G$ is a connected graph on~$n$ vertices, then
    \[b(G) \le (1+o(1))\sqrt{n}.\]
\end{theorem}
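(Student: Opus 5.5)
Since this is a result quoted from the literature (Theorem 1.2 of \cite{norin2024burning}), the paper itself will simply invoke it. If I had to reconstruct a proof, I would follow the outline below. I am fairly confident in the first two reductions. The third step is only a direction, and I have not checked that it gives the $o(\sqrt{n})$ error term.

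\textbf{Reduction to trees.} By \Cref{lem:SpanningTree}, it suffices to prove the bound for trees. So fix a tree $T$ on $n$ vertices and aim for $b(T)\le m$ with $m=(1+\varepsilon)\sqrt{n}$ for every fixed $\varepsilon>0$ and all $n$ large enough.

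\textbf{Reformulation as a ball cover.} A burning sequence $(b_1,\ldots,b_m)$ succeeds exactly when $V(T)=\bigcup_{i=1}^m N_{m-i}[b_i]$. Hence it suffices to cover $V(T)$ with balls of radii $m-1,m-2,\ldots,0$, one ball of each radius; repeated or already-burned centres are harmless, since a redundant source can be replaced by any vertex. This turns the problem into packing a prescribed multiset of radii.

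\textbf{Peeling argument.} I would peel off large balls one at a time, as in the proof of \Cref{thm:k-angBoundle}. At each stage, apply \Cref{lem:TheRightBridgeStrong} with a threshold $p$ tied to the current radius $r$. This gives a vertex $x$ such that every branch of $T_x(xy)$ has at most $p$ vertices, so the ball $N_r[x]$ removes all of $T_x(xy)$. The remainder $T_y(xy)$ is again a tree, so the induction continues, exactly as the induction on internal vertices does there. The accounting target is that the ball of radius $r$ removes about $2r$ vertices. Then $\sum_{r<m}2r\approx m^2\ge n$ once $m\ge(1+\varepsilon)\sqrt{n}$.

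\textbf{Main obstacle.} The naive peeling only guarantees that a ball of radius $r$ removes more than $r$ vertices, because a branch of size $p$ can have depth $p$. That loses a factor of $2$ and yields only $b(T)\le(1+o(1))\sqrt{2n}$. Closing this gap is the real content.

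What I would try first is a two-sided peeling step. Choose the centre $x$ not at the bridge vertex but up to distance about $r/2$ inside the heavy side. Then the ball captures the light side (which has depth at most $p$) together with a portion of the heavy side of depth comparable to $r$. That portion should contribute roughly another $r$ vertices, unless the heavy side is path-like near $x$.

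When it is path-like, I would treat it separately. Long bare paths can be covered almost perfectly, since a ball of radius $r$ covers $2r+1$ path vertices. I would absorb these paths into the cover at a loss of $O(1)$ per ball, which totals $O(m)=o(n)$.

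The remaining bookkeeping handles small leftover pieces, using the spare $\varepsilon\sqrt{n}$ radii and an initial segment of small balls. That is where I expect most of the technical effort to go.
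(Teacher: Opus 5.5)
The paper gives no proof of this statement; it imports it as a black box from Norin and Turcotte. Judged as a proof, your proposal has a genuine gap, and it is exactly the step you flag. What you actually establish is the reduction to spanning trees, the ball-cover reformulation, and one-sided peeling via \Cref{lem:TheRightBridgeStrong} with threshold $p=r$. That only guarantees that a ball of radius $r$ removes at least $r+1$ vertices, and $\sum_{r<m}(r+1)=m(m+1)/2\ge n$ gives $b(T)\le(1+o(1))\sqrt{2n}$. The whole content of the theorem is the missing factor $2$, and for that you offer only a heuristic.

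That heuristic also fails as stated. Suppose you move the centre to a vertex $x'$ at distance about $r/2$ inside the heavy side. The light branches then lie in $N_r[x']$ only if their depth is at most $r/2$, so you must lower the threshold to $p\approx r/2$. This halves the guaranteed light-side gain and pushes all of the improvement onto the heavy side. There, removing $N_r[x']$ generally disconnects the remainder: every branch of $T_y(xy)$ that leaves the ball becomes a separate component. Your induction is over trees, and \Cref{lem:TheRightBridgeStrong} needs a tree, so neither applies to what is left. That connectivity is precisely what makes the peeling in the proof of \Cref{thm:k-angBoundle} work. The natural strengthenings of the hypothesis, to forests or to covering an arbitrary vertex subset of a tree, are false: $t$ vertices pairwise at distance more than $2m$ need $t$ balls, far more than $(1+o(1))\sqrt{t}$.

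The same issue undermines the ``$O(1)$ loss per ball'' in the path-like case. The loss is incurred per piece (at each path end), not per ball. You also have only one ball of each radius to distribute among pieces of prescribed lengths. If the peeling produces $\Omega(\sqrt{n})$ pieces each needing its own ball, this alone costs a constant factor. Closing the gap needs a genuinely global ingredient that controls fragmentation, or a different decomposition altogether; a local greedy peeling step of this kind does not supply it.
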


To apply \Cref{thm:Bastide,thm:Norin}, we use an argument based on dominating sets.
A set~$D$ of vertices is a \emph{dominating set} for a graph~$G$ if every vertex in~$G$ is either contained in~$D$ or has a neighbour in~$D$.
We first show that a~$k^+$-branching tree must contain at most~$n/(k-1)$ internal vertices.
These vertices form a connected dominating set (i.e., the internal vertices induce a connected graph), and upon burning all vertices therein, we require at most one additional round to burn the leaves in the tree.

\begin{lemma}\label{lem:k-branchleaflowerbound}
    Let~$T$ be a~$k^+$-branching tree on~$n$ vertices, where~$k\ge3$. Then~$T$ contains at least~$\displaystyle \left\lceil \frac{n(k-2)}{(k-1)}\right\rceil$ leaves.
\end{lemma}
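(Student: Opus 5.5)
The plan is a degree count via the handshaking lemma, in the spirit of the proof of \Cref{lem:k-angInt}. Let $I$ be the number of internal vertices of $T$ and $L = n - I$ the number of leaves. The goal is to show $I \le n/(k-1)$. This is equivalent to $L \ge n - n/(k-1) = n(k-2)/(k-1)$. Since $L$ is an integer, it then follows that $L \ge \left\lceil n(k-2)/(k-1)\right\rceil$.

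\textbf{Main case ($n \ge 2$).} Every internal vertex has degree at least $k$ and every leaf has degree exactly $1$. The handshaking lemma then gives
\[2(n-1) = \sum_{v\in V(T)} \deg(v) \ge kI + (n - I) = n + (k-1)I,\]
so $(k-1)I \le n-2 < n$. This is exactly the inequality $|T| \ge I(k-1)+2$ from \Cref{lem:k-angInt}, which I would invoke directly instead of redoing the computation. From it, $I \le (n-2)/(k-1) \le n/(k-1)$, hence
\[L = n - I \ge n - \frac{n}{k-1} = \frac{n(k-2)}{k-1},\]
and rounding up gives the claim.

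\textbf{Degenerate case ($n = 1$).} The single vertex has degree $0$, so it is neither a leaf nor of degree at least $k$ under the paper's conventions. Here the bound would demand $\left\lceil (k-2)/(k-1)\right\rceil = 1$ leaf, which a single vertex does not have. I would therefore either assume $n \ge 2$, as is implicit since $T$ is $k^+$-branching with leaves, or note that this case is excluded by convention.

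I expect no real obstacle. The only care needed is this convention check for very small $n$ and the final rounding step, which is valid because the number of leaves is an integer.
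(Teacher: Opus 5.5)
Your proof is correct and is essentially the paper's argument. The paper also applies the handshaking lemma to get $2(n-1)\ge kI+\ell$ and substitutes $I=n-\ell$, obtaining $\ell\ge \frac{n(k-2)}{k-1}+\frac{2}{k-1}$; this is exactly what your bound $I\le (n-2)/(k-1)$ gives before you relax it to $I\le n/(k-1)$.

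On your degenerate case: no extra convention is needed. Under the paper's definitions the lone vertex of $K_1$ is internal of degree $0<k$, so $K_1$ is simply not $k^+$-branching.
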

\begin{proof}
    Let~$I,\ell$ be the number of internal vertices and leaves of~$T$, respectively.
    By the handshaking lemma, we obtain
    $2(n-1)\ge kI + \ell$,
    for which substituting~$I = n-\ell$ gives
    \[\ell\ge \frac{n(k-2)}{(k-1)} + \frac{2}{(k-1)}.\qedhere\]
\end{proof}

\begin{theorem}\label{thm:Bastide4/3GP}
    Let~$T$ be a~$k^+$-branching tree on~$n$ vertices, where~$k\ge3$. Then
    \[b(T) \le \left\lceil \sqrt{\frac{4n}{3(k-1)}}~\right\rceil + 2.\]
\end{theorem}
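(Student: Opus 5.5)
The plan is to burn the subtree spanned by the internal vertices using \Cref{thm:Bastide}, and then spend one extra round letting the fire reach the leaves. Let $I$ and $\ell$ be the numbers of internal vertices and leaves of $T$, so that $I = n-\ell$.

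First I dispose of the degenerate cases. If $I=0$, then $n\le 2$ and $b(T)\le 2$, which is within the bound. Otherwise let $T^\circ := T - \{\text{leaves}\}$. This is the subtree induced by the internal vertices, and it is connected, since deleting leaves from a tree leaves a tree. If $|T^\circ| = 1$, then $T$ is a star and $b(T)=2$, so the bound holds trivially.

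By \Cref{lem:k-branchleaflowerbound}, we have $\ell \ge n(k-2)/(k-1)$, and hence
\[|T^\circ| = n-\ell \le n - \frac{n(k-2)}{k-1} = \frac{n}{k-1}.\]
Applying \Cref{thm:Bastide} to the connected graph $T^\circ$ then gives a burning sequence $(x_1,\ldots,x_r)$ for $T^\circ$ with
\[r \le \left\lceil \sqrt{4|T^\circ|/3}\right\rceil + 1 \le \left\lceil \sqrt{\frac{4n}{3(k-1)}}~\right\rceil + 1,\]
where the last step uses monotonicity of the square root and of the ceiling.

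Next I extend this to a burning sequence for $T$ by taking $(x_1,\ldots,x_r,v)$ for an arbitrary vertex $v$. This requires a small check. Since $T^\circ$ is a subtree of $T$, distances between vertices of $T^\circ$ are the same in $T$ as in $T^\circ$ (paths in a tree are unique). So every vertex of $T^\circ$ burned by round $r$ in $T^\circ$ is burned in $T$ by round $r$ at the latest, because fire in $T$ spreads at least as fast. Every leaf of $T$ is adjacent to an internal vertex, which is burned by round $r$, so the leaf burns by round $r+1$. The remaining sources $x_i$ are valid choices in $T$ as well; if some $x_i$ is already burned, this is permitted, since the paper allows $b_i$ to be already burned.

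Thus $b(T)\le r+1 \le \left\lceil \sqrt{4n/(3(k-1))}\right\rceil + 2$. The only mildly delicate point is the monotone comparison between burning in $T^\circ$ and burning in $T$; I expect it to be routine using the distance-based characterisation of burning, namely that a vertex $w$ is burned by round $t$ if and only if $d(w,x_i)\le t-i$ for some $i\le t$.
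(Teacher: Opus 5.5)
Your proposal is correct and follows the paper's proof: delete the leaves, use \Cref{lem:k-branchleaflowerbound} to get at most $n/(k-1)$ internal vertices, apply \Cref{thm:Bastide} to the resulting connected subtree, and spend one extra round burning the leaves. Your degenerate-case handling and your check that a burning sequence for $T^\circ$ extends to $T$ spell out what the paper leaves implicit.
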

\begin{proof}
    Consider the tree~$T'$ obtained by removing all leaves of~$T$.
    This tree~$T'$ is connected, forms a dominating set of~$T$, and by \Cref{lem:k-branchleaflowerbound},~$|T'|\le n/(k-1)$.
    By \Cref{thm:Bastide},
    \[b(T') \le \left\lceil \sqrt{\frac{4n}{3(k-1)}}~\right\rceil + 1.\]
    We may require an extra round to burn all leaves of~$T$, so~$b(T)\le b(T')+1$; this gives the required result.
\end{proof}

The proof of the following theorem is analogous to that of~\Cref{thm:Bastide4/3GP} when~$k\ge 3$. 
When~$k=2$, the result follows directly from \Cref{thm:Norin}.

\begin{theorem}\label{thm:NorinGP}
    Let~$T$ be a~$k^+$-branching tree on~$n$ vertices, where~$k\ge2$. Then
    \[b(T) \le (1+o(1))\sqrt{n/(k-1)}.\]
\end{theorem}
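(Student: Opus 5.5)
We follow the same strategy as in the proof of \Cref{thm:Bastide4/3GP}, replacing \Cref{thm:Bastide} by \Cref{thm:Norin}, and treat the case~$k=2$ separately.

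\textbf{Case $k=2$.} Here $\sqrt{n/(k-1)}=\sqrt{n}$ and $T$ is an arbitrary tree on~$n$ vertices, so the claim is exactly \Cref{thm:Norin} applied to~$T$.

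\textbf{Case $k\ge 3$.} We first handle degenerate cases.
\begin{itemize}
    \item If~$n\le 2$, then $b(T)\le 2$, a constant.
    \item If~$T$ is a star, then $b(T)=2$.
\end{itemize}
In both cases the bound holds trivially, since the~$o(1)$ term absorbs additive constants once we interpret the statement asymptotically in~$n$; we justify this absorption below.

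Otherwise, let~$T'$ be the tree obtained from~$T$ by deleting all leaves. This~$T'$ is connected, since deleting leaves from a tree leaves a tree. It is a dominating set of~$T$, because every leaf of~$T$ is adjacent to an internal vertex. By \Cref{lem:k-branchleaflowerbound}, $T$ has at least $n(k-2)/(k-1)$ leaves, hence
\[|T'|\le n-\frac{n(k-2)}{k-1}=\frac{n}{k-1}.\]
Applying \Cref{thm:Norin} to the connected graph~$T'$ gives
\[b(T')\le (1+o(1))\sqrt{|T'|}\le (1+o(1))\sqrt{n/(k-1)}.\]
Next, take an optimal burning sequence of~$T'$ and append one arbitrary extra source. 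After the rounds of the sequence for~$T'$, all of~$V(T')$ is burned. Since~$V(T')$ dominates~$T$, every leaf burns within one more round. Distances in~$T'$ equal distances in~$T$, as~$T'$ is a subtree obtained by removing leaves, so the spread inside~$T'$ is unaffected. Therefore
\[b(T)\le b(T')+1\le (1+o(1))\sqrt{n/(k-1)}+1.\]

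The main obstacle is absorbing the additive~$+1$, and the fact that~$|T'|$ rather than~$n$ tends to infinity, into the~$(1+o(1))$ factor. I would interpret the~$o(1)$ as a quantity tending to~$0$ as~$n/(k-1)\to\infty$; this is the only regime in which the statement has content, and~$|T'|\ge 2$ whenever~$T$ is not a star.

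To make this precise, write Norin and Turcotte's bound as $b(G)\le \sqrt{m}+f(m)$ with~$f(m)=o(\sqrt m)$ for a graph on~$m$ vertices. Since~$\sqrt{m}+f(m)$ may be replaced by its running maximum, we may assume it is nondecreasing in~$m$. Then
\[b(T)\le \sqrt{N}+f(N)+1, \qquad N:=n/(k-1),\]
and $(f(N)+1)/\sqrt N\to 0$ as~$N\to\infty$. This establishes the stated form.

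If instead~$N$ stays bounded, then~$b(T)$ is bounded by~$b(T')+1\le |T'|+1\le N+1$, a constant. In that regime the statement should be read with the~$o(1)$ uniform only in~$N$. This caveat about uniformity is the one point I would state carefully in the write-up.
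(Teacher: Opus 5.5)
Your proposal is correct and is the paper's argument: for $k=2$ you apply \Cref{thm:Norin} directly, and for $k\ge3$ you delete the leaves, bound $|T'|\le n/(k-1)$ via \Cref{lem:k-branchleaflowerbound}, apply \Cref{thm:Norin} to $T'$, and spend one extra round on the leaves. The paper leaves two things implicit that you handle: absorbing the $+1$ by making the Norin--Turcotte error term monotone, and reading the $o(1)$ as $n/(k-1)\to\infty$, which is necessary since stars with $k=n-1$ have $b(T)=2$ while $\sqrt{n/(k-1)}\to 1$.
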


We show that the bound in \Cref{thm:NorinGP} is asymptotically tight.
We say that a tree is a \emph{caterpillar} if all vertices are within distance~$1$ of a central path.

\begin{observation}\label{obs:GP}
    Let~$k\ge 2$.
    There exists a~$k^+$-branching tree on~$n$ vertices with~$b(T)\ge\left\lceil\sqrt{n/(k-1)}\right\rceil$.
\end{observation}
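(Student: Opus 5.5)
We will construct a caterpillar explicitly, as the remark preceding the statement suggests. Let $m\ge 1$ be an integer. Take a spine path $P=(p_1,\ldots,p_m)$ and attach pendant leaves so that every spine vertex has degree exactly $k$. That is, $p_1$ and $p_m$ each receive $k-1$ leaves and every interior $p_i$ receives $k-2$ leaves; when $m=1$, $p_1$ receives $k$ leaves. The result $T$ is a $k^+$-branching caterpillar. For $m\ge2$ the number of leaves is $2(k-1)+(m-2)(k-2)=m(k-2)+2$, so $n=m(k-1)+2$.

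The lower bound rests on the following covering fact, used throughout the burning literature. If $(x_1,\ldots,x_b)$ is a burning sequence, then every vertex lies in $N_{b-i}[x_i]$ for some $i$. We bound how many of the "end" leaves such a ball can contain. For each spine vertex $p_i$, choose one pendant leaf $\ell_i$. Then the $\ell_i$ are pairwise at distance $|i-j|+2$, and each lies at distance $1$ from the spine.

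A ball $N_r[x]$ contains the leaves $\ell_i$ only for indices $i$ in an interval. If $x$ projects to $p_j$ (meaning $x=p_j$ or $x$ is a leaf at $p_j$), then $d(x,\ell_i)\ge |i-j|+1$ when $\ell_i\ne x$. So $N_r[x]$ contains at most $2r-1$ of the chosen leaves when $r\ge1$, and at most $1$ when $r=0$. Summing over $i=1,\ldots,b$ with radii $b-i$ gives at most
\[\sum_{r=1}^{b-1}(2r-1)+1=(b-1)^2+1\]
chosen leaves covered. Since all $m$ chosen leaves must be covered, $m\le (b-1)^2+1$.

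This is too weak by roughly a factor of $1$ in the square root. The claim $b\ge\lceil\sqrt{n/(k-1)}\rceil\approx\sqrt m$ is the path-like bound, so the counting must be sharpened. Instead of choosing one leaf per spine vertex, count spine positions covered: a ball of radius $r$ centred at a spine vertex covers at most $2r+1$ spine vertices. The leaves force more, however. If we count the pendant leaves of $p_i$ as "covering position $i$", a ball of radius $r$ covers all leaves of $p_i$ only if $|i-j|\le r-1$, giving $2r-1$ positions, plus one extra partially. I will therefore use a cleaner choice: the covering cost of a leaf-rich caterpillar is that of a path on about $2m$ vertices. Subdividing mentally, the leaves on alternate sides behave like a path of length $\sim m$ with ball size $2r-1$. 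The sum $\sum_{r=0}^{b-1}$ of at most $2r+1$ spine positions, adjusted to $2r-1$ for full leaf coverage, gives $\sum_{i=1}^{b}(2(b-i)-1)^+ + \ldots$. The precise inequality I aim to prove is
\[m\le b^2-\text{(small)}\quad\Rightarrow\quad b\ge\sqrt{m}\ge\sqrt{(n-2)/(k-1)}.\]
If that loses the ceiling, I will instead add extra leaves at the ends, keeping $n\equiv 0\pmod{k-1}$, or choose $m$ so that $n/(k-1)$ is a perfect square.

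The main obstacle is this exact counting step. The statement only needs existence, for each $n$ or for some $n$, so the fallback is to pick $m$ cleverly. The cleanest option is to take $T$ with $n=(k-1)s^2$ vertices and show that $b(T)\ge s$. For this I will show that $s-1$ rounds cover at most $(k-1)(s-1)^2+\ldots<n$ vertices, by bounding $|N_r[x]|\le (k-1)(2r-1)+2\le(k-1)(2r+1)$ in a max-degree-$k$ caterpillar with spine. Summing over $r=0,\ldots,s-2$ gives
\[\sum_{r=0}^{s-2}|N_r[x_{s-1-r}]|\le (k-1)(s-1)^2+O(s),\]
and comparing with $n=(k-1)s^2$ closes the argument for large $s$. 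The case $k=2$ is the path case of \cite{bonato2016burn}.
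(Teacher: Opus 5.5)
\textbf{Gap: arbitrary $n$ is never handled.} The observation is meant for every $n$. Read as ``for some $n$'' it is vacuous, since a single edge works, and the paper's proof treats all $n$. Your middle paragraph (``$m\le b^2-\text{(small)}$'', the ``path on about $2m$ vertices'' heuristic) never becomes an argument. The fallback you actually commit to only covers $n=(k-1)s^2$.

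Even there, a tree whose internal vertices all have degree exactly $k$ satisfies $n\equiv 2 \pmod{k-1}$ by handshaking. So for $k\ge 4$ your ``max-degree-$k$ caterpillar'' with $n=(k-1)s^2$ does not exist, and you need the extra-leaf variant. The bound $|N_r[x]|\le (k-1)(2r+1)$ still holds there for $k\ge 3$. Then $s-1$ rounds cover at most $(k-1)(s-1)^2<n$ vertices for every $s$, so your ``$O(s)$, large $s$'' hedge is unnecessary. Still, this only gives the bound along a subsequence of $n$.

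\textbf{The fix is your first argument.} It is not too weak. From $m\le (b-1)^2+1$ you get $b\ge 1+\sqrt{m-1}$, hence $b^2\ge m+2\sqrt{m-1}\ge m+2$ for $m\ge 2$. For arbitrary $n$ and $k\ge 3$:
\begin{itemize}
\item Take $m=\lfloor (n-2)/(k-1)\rfloor$ spine vertices and attach the leftover $n-m(k-1)-2$ vertices as extra leaves. The chosen-leaf count ignores them, and the inequality $d(x,\ell_i)\ge |i-j|+1$ still holds for sources that are extra leaves.
\item Then $n/(k-1)<m+1+2/(k-1)\le m+2\le b^2$, so $b\ge\lceil\sqrt{n/(k-1)}\,\rceil$.
\item The star case $m=1$ is immediate, because then $\lceil\sqrt{n/(k-1)}\,\rceil=2=b$.
\item The case $k=2$ is the path, as you said.
\end{itemize}

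\textbf{The paper's route.} It needs no counting. Such a caterpillar contains a path on $t=\lfloor (n-2)/(k-1)\rfloor+2$ vertices (end leaf, spine, end leaf). A subtree of a tree is isometric, so $b(T)\ge b(P_t)=\lceil\sqrt{t}\,\rceil$ by the monotonicity result of Bonato et al. Finally $t\ge n/(k-1)$ for $k\ge 3$, and $t=n$ for $k=2$. Your chosen-leaf bound $b\ge 1+\sqrt{m-1}$ is at least as strong as this path bound and stronger for large $m$. The path argument, however, is one line and reuses known results.
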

\begin{proof}
    Consider a~$k^+$-branching caterpillar~$T$ with central path on $t = \left\lfloor\frac{n-2}{k-1} \right\rfloor + 2$ vertices.
    Then, as~$T$ contains~$P_t$ as a subtree,
    \[b(T) \ge b(P_{t}) = \left\lceil\sqrt{t}\right\rceil
    \ge \left\lceil\sqrt{\frac{n-2}{k-1}-1+2}\right\rceil
    \ge \left\lceil\sqrt{n/(k-1)}\right\rceil.\qedhere\]
\end{proof}

To conclude this section, we check when our bound on the burning number for~$k^+$-branching trees (\Cref{thm:k-angBoundle}) matches or outperforms that of Theorem~\ref{thm:Bastide4/3GP}.

\begin{theorem}\label{thm:UsVSBastide}
    Let~$T$ be a~$k^+$-branching tree on~$n$ vertices, where
    \[n\le \displaystyle(k-1){\left [\sqrt{\frac{k-2}{k-1}}- \frac{1}{\sqrt{3}}\right ]^{-2}}.\]
    Then
    \[\sqrt{\frac{4n(k-2)}{(k-1)^2}} \le \sqrt{\frac{4n}{3(k-1)}}+2.\]   
\end{theorem}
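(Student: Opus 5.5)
This is a purely algebraic inequality in $n$ and $k$; the tree $T$ plays no role. I would substitute $s=\sqrt{n}\ge 0$, so the claim becomes linear in $s$:
\[
s\left(\frac{2\sqrt{k-2}}{k-1}-\frac{2}{\sqrt{3(k-1)}}\right)\le 2.
\]
Factoring $2/\sqrt{k-1}$ out of the bracket, the left-hand side equals
\[
\frac{2s}{\sqrt{k-1}}\left(\sqrt{\frac{k-2}{k-1}}-\frac{1}{\sqrt3}\right).
\]
So the inequality is equivalent to $s\,\alpha\le\sqrt{k-1}$, where
\[
\alpha:=\sqrt{\frac{k-2}{k-1}}-\frac{1}{\sqrt{3}}.
\]

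I would then split into cases on the sign of $\alpha$.

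\textbf{Case $\alpha\le 0$.} This happens for $k=3$, since $\sqrt{1/2}>1/\sqrt3$ gives $\alpha>0$ there, so in fact $\alpha\le0$ only if $(k-2)/(k-1)\le 1/3$, i.e.\ $k\le 5/2$. Since $k\ge3$, this case does not occur. It is still worth noting: the left side is then nonpositive and the inequality holds trivially.

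\textbf{Case $\alpha>0$.} This holds for all $k\ge3$. Here $s\alpha\le\sqrt{k-1}$ is equivalent to $s\le \sqrt{k-1}/\alpha$. Squaring, with both sides nonnegative, this is $n\le (k-1)\alpha^{-2}$, which is exactly the hypothesis. Squaring is reversible because all quantities are nonnegative, so the hypothesis implies the desired inequality.

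The only step needing care is checking that $\alpha>0$ for $k\ge3$, so that dividing by $\alpha$ preserves the inequality direction and $\alpha^{-2}$ is meaningful. Since $(k-2)/(k-1)$ is increasing in $k$, $\alpha$ is minimised at $k=3$, where $\alpha=1/\sqrt2-1/\sqrt3>0$. Beyond that, the argument is routine rearrangement. If $k=2$ were allowed, $\alpha<0$ and the statement would hold vacuously on the left; this is not needed here.
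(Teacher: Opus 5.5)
Your proof is correct and follows the same route as the paper: the paper also factors the difference of the two square roots as $\frac{2\sqrt{n}}{\sqrt{k-1}}[\sqrt{(k-2)/(k-1)}-1/\sqrt{3}]$ and reads off the bound $\le 2$ from the hypothesis, while you additionally spell out the positivity of the bracket for $k\ge 3$, which the paper uses silently. One wording slip: the opening clause of your $\alpha\le 0$ case, ``This happens for $k=3$'', contradicts the rest of that sentence and should be deleted.
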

\begin{proof}
    We have
    \[\sqrt{\frac{4n(k-2)}{(k-1)^2}} - \sqrt{\frac{4n}{3(k-1)}} = \frac{2\sqrt{n}}{\sqrt{k-1}}\left[ \sqrt{\frac{k-2}{k-1}} - \frac{1}{\sqrt{3}}\right] \le 2.\qedhere\]
\end{proof}
For example, when~$k=3$, 
this occurs whenever~$n\le 118$.
When~$k=4$, we have~$n\le 52$.
The function~$f(k) = (k-1){\left [\sqrt{\frac{k-2}{k-1}}- \frac{1}{\sqrt{3}}\right ]^{-2}}$ has a local minimum at~$k=5$, and it is strictly increasing for~$k\ge 5$.
We illustrate the bounds on~$n$ for some more values of~$k$ in \Cref{tab:UsvsBastide}.
\begin{table}[ht]
\centering
\begin{tabular}{|l||l|l|l|l|l|l|l|l|l|l|l|l|l|}
\hline
$k$ & 3   & 4  & 5  & 6  & 7  & 8  & 9  & 10 & 20 & 50 & 100 & 200 \\ \hline
$n$ & 118 & 52 & 48 & 49 & 53 & 57 & 62 & 67 & 121 & 288 & 567 & 1127  \\ \hline
\end{tabular}
\caption{An illustration of the bound on~$n$ from \Cref{thm:UsVSBastide}.
For a fixed~$k$ value, let~$T$ be a~$k^+$-branching tree on~$n$ vertices.
If~$n$ is at most the value listed in the corresponding column, then the bound for~$b(T)$ from \Cref{thm:k-angBoundle} outperforms or matches that of \Cref{thm:Bastide4/3GP}.}
\label{tab:UsvsBastide}
\end{table}

\section{Discussion}
\NV{By our results, a graph that contains a $k^{+}$-branching spanning tree can be burned efficiently. This naturally raises the question of what graphs allow for such a spanning tree. This question has recently garnered some attention and some sufficient conditions have been formulated (cf. \cite{furuya2025k+branchingspanning}), but much is still unknown. In particular, no structural characterisation of graphs with $k^{+}$-branching spanning trees is currently known. }

\NV{A related question is the following: given a graph $G$, what is the maximal $k$ such that $G$ contains a $k^{+}$-branching spanning tree? If we write $\mathrm{branch}(G)$ for that maximal $k$, we find the following bound on the burning number of $G$:
\[b(G)\leq \min\left\{\left\lceil{\sqrt{\frac{4(\mathrm{branch}(G)-2)|G|}{(\mathrm{branch}(G)-1)^2}}}~\right\rceil,\left\lceil\sqrt{\frac{4|G|}{3(\mathrm{branch}(G)-1)}}~\right\rceil+2\right\}\]
by \Cref{thm:k-angBoundle,thm:Bastide4/3GP}.
Note that~$\mathrm{branch}(G)$ is well-defined as every graph has a spanning tree, i.e.,~$\mathrm{branch}(G)\ge 2$.}
\YM{This means the above bound on~$b(G)$ is also well-defined.}

\NV{A third question of interest is the following. In the class of $k^{+}$-branching trees with $n$ vertices, there is a trivial lower bound for the burning number, since the star graph can be burned in two rounds. However, if we consider $k$-branching trees instead, i.e., trees in which every vertex has either degree $1$ or $k$, this trivial lower bound fails. So, given $n$ and $k$, what are the minimal and maximal burning numbers in the class of $k$-branching trees with $n$ vertices? Similarly, if the burning number and $k$ are given, what can be said about $n$?}


\bibliography{z_bib}

\end{document}